\def\NAT@def@citea{\def\@citea{\NAT@separator}}
\theoremstyle{plain}
\newtheorem{theorem}{Theorem}[section]
\newtheorem{lemma}[theorem]{Lemma}
\theoremstyle{definition}
\newtheorem{definition}[theorem]{Definition}
\theoremstyle{remark}
\newtheorem{remark}{Remark}
\begin{document}


\title{Error estimation for the non-convex cosparse optimization problem}

\author{
\name{Zisheng Liu
\textsuperscript{a}\thanks{CONTACT Z. Liu Email: liuzisheng0710@163.com; Ting Zhang Email: zhangting314314@126.com},
Ting Zhang\textsuperscript{~b}}
\affil{\textsuperscript{a}School of Statistics and Big Data, Henan University of Economics and Law, Zhengzhou, China;\\
       \textsuperscript{b}School of Mathematics and Information Science, Henan University of Economics and Law, Zhengzhou, China.}
}

\maketitle

\begin{abstract}
When the signal does not have a sparse structure but has sparsity under a certain transformation domain,
Nam et al. \cite{NS} introduced the cosparse analysis model, which provides a dual perspective on the sparse representation model.
This paper mainly discusses the error estimation of non-convex $\ell_p(0<p<1)$ relaxation cosparse optimization model with noise condition.
Compared with the existing literature, under the same conditions, the value range of the $\Omega$-RIP constant $\delta_{7s}$ given in this paper is wider.
When $p=0.5$ and $\delta_{7s}=0.5$, the error constants $C_0$ and $C_1$ in this paper are better than those corresponding results in the literature \cite{Cand,LiSong1}.
Moreover, when $0<p<1$, the error results of the non-convex relaxation method are significantly smaller than those of the convex relaxation method.
The experimental results verify the correctness of the theoretical analysis and illustrate that the $\ell_p(0<p<1)$ method can provide robust reconstruction for cosparse optimization problems.
\end{abstract}

\begin{keywords}
sparse synthesis model; cosparse analysis model; non-convex relaxation method; error estimation
\end{keywords}

\begin{amscode}
90C25
\end{amscode}

\section{Introduction}
\label{sec:introduction}
\subsection{Sparse synthesis model}

Although signals may exist in a high-dimensional ambient space, they often possess a low-dimensional structure that can be exploited for efficient recovery.
Prior knowledge of the low-dimensional structure can be incorporated to aid in signal recovery.
Sparsity is a common form of prior information used in this context and is the basis for the emerging field of compressive sensing (CS \cite{Donoho1,Donoho2,CS1,CS2,CS3}).
Sparse recovery has found applications in various domains, including imaging, speech processing, radar signal processing, sampling, and others, for recovering signals with sparse representations \cite{c1, c2, c3, c4}.
The linear system under consideration is a typical example of a sparse recovery problem
\begin{eqnarray}\label{e1}
y=Ax,
\end{eqnarray}
where $y\in R^m$ is an observed vector, $A\in R^{m\times d}$ is a measurement matrix and $x\in R^d$ is an unknown signal which would be estimated from $y$. We say that a signal $x$ is $s$-sparse if $\|x\|_0 := \#\{i: x_i \neq 0\} \leq s$.

In the last nearly two decades, the application of CS has greatly improved the speed and efficiency of image reconstruction due to its ability to reconstruct images from severely undersampled signals.
For sparse synthetic model, if the vector $x$ is sufficiently sparse, under the assumption of the inconsistency of the measurement matrix $A$, $x$ can be accurately estimated by the problem \eqref{e2}
\begin{equation}\label{e2}
\begin{aligned}
\min_x&~ \|x\|_p\\
s.t.&~y = Ax.
\end{aligned}
\end{equation}
where $0 < p \leq 1$. The key idea is to use the $\ell_p (0 < p \leq 1)$ norm (for a vector $x \in R^d$, the $\ell_p (0 < p \leq 1)$ (quasi)-norm of a vector $x$ means $\|x\|_p = (\sum_i^d |x_i|^p)^{\frac{1}{p}}  )$ or quasi-norm to relax the $\ell_0$ norm (the number of nonzero entries).
This theory has been a very active field of recent researches with a wide range of applications,
including signal processing, medical imaging, binary measurement and other applications and statistics \cite{c19,c21,c22,LS,LT,IM}.


With the popularization and application of computer and information technology, especially the rapid expansion of the application scale of cloud computing, internet technology, communication technology, digital economy and other industries, a large amount of redundant non-sparse data will be generated at all times, such as sensor signals, image data, financial data, etc\cite{NS,GR,YM,RR}.
To analysis these these non-sparse data, Nam et al. \cite{NS} proposed a new model from the point of view of the duality of the sparse representation model, i.e. consider the number of zero elements to analyze the vector.

\subsection{Cosparse analysis model}
Compared to the sparse model, the cosparse analysis model \cite{c23,c5,c30,c62,Davies1,Song} is an alternative  approach.
This model can be viewed as a generalization of the traditional sparse analysis model, which represents a signal as a linear combination of a small number of atoms from a dictionary \cite{RR}.
From a mathematical point of view, this model can be described as follow.

For a possibly redundant analysis operator $\Omega\in R^{n\times d}(n\geq d)$, the analyzed vector $\Omega x$ is expected to be sparse.
It means that a signal $x\in R^d$ belongs to the cosparse analysis model with cosparsity $\ell=n-\|\Omega x\|_0$, where the quantity $\ell$ is the number of rows in $\Omega$ that are orthogonal to the signal, then $x$ is said to be $\ell$-cosparse.
Based on the above description, the specific definition of cosparse and cosupport \cite{c5} are defined as below.

\begin{definition}[Cosparse]\label{d1}
A signal $x \in R^d$ is said to be cosparse with respect to an analysis operator $\Omega\in R^{n\times d}$ if the analysis representation vector $\Omega x$ contains many zero elements. Further, the number of zero elements
\begin{eqnarray*}
\ell= n-\|\Omega x\|_0
\end{eqnarray*}
is called the cosparsity of $x$, we also say $x$ is $\ell$-cosparse.
\end{definition}

The corresponding cosupport is defined as below.
\begin{definition}[Cosupport]\label{d2}
For a given analysis operator $\Omega\in R^{n\times d}$
and its rows $\Omega_j\in R^d(1\leq j\leq n)$,
the cosupport is defined by
\begin{eqnarray*}
\Lambda:=\{j|\langle \Omega_j,x\rangle=0\}.
\end{eqnarray*}
\end{definition}
For simplicity here, we assume that $\Omega$ is a tight frame $\Omega^T \Omega =I$.
We remind the reader that a frame is defined as below.
\begin{definition}[\cite{c6,c7}]\label{d3}
Let $\Phi=\{\varphi_{i}\}_{i=1}^{N} \subset R^{n}$ be a vector sequence of the Hilbert space with $N\geq n$. If there exist constants $0<A\leq B<\infty$ such that
\begin{eqnarray}
A\|x\|^{2}\leq\sum_{i=1}^N|\langle x,\varphi_{i}\rangle|^2\leq B\|x\|^{2},~~~\forall x\in R^{n},
\end{eqnarray}
then $\Phi$ is referred to as a finite frame of $R^{n}$. The constants $A$ and $B$ in the above formula are known as the lower and upper bounds of the finite frame $\Phi$, respectively. They are considered to be the optimal bounds, with $A$ being the supremum in the lower bound and $B$ being the infimum in the upper bound. If $A=B$, then the frame $\Phi$ is called an $A$-tight frame. If $A=B=1$, then $\Phi$ is called a Parseval frame. If there exists a constant $C$ such that each meta-norm $\|\varphi_{i}\|=C$ of the frame $\Phi$, then $\Phi$ is called an iso-norm frame. In particular, for a tight frame, if $C=1$, it is referred to as a uniformly tight frame.
\end{definition}

The concept of cosparsity in the cosparse analysis model is based on the number of zero elements in the analysis representation vector $\Omega x$, rather than the number of non-zero elements. This is opposite to the viewpoint of the sparse synthesis model. When the cosparsity $\ell$ is large, meaning that the number of zeros in $\Omega x$ is close to $d$, we say that $x$ has a cosparse representation. The cosupport set is identified by removing rows from $\Omega$ for which $\langle \Omega_j,x\rangle\neq0$, until the index set $\Lambda$ remains unchanged and $|\Lambda|\geq\ell$.

Similar to the sparse model, if the analysis representation vector $\Omega x$ is sparse, in compressed sensing, one considers the $Ax + v = y$ ($v$ is the noise vector, and $\|v\|_2 \leq \sigma$), the recovery problem can be formulated as
\begin{equation}\label{e3}
\begin{aligned}
\min_x &~\|\Omega x\|_0\\
s.t.&~\|y - Ax\|_2\leq \sigma.
\end{aligned}
\end{equation}

Solving the minimization problem \eqref{e3} is NP-hard \cite{c5}, making approximation methods necessary.
Similar to the sparse model, one option is to use the greedy analysis pursuit (GAP) \cite{c23,c5,c62,RGiryes} approach,
which is inspired by the orthogonal matching pursuit (OMP) algorithm.
Another approach is to approximate the nonconvex $\ell_0$ norm (it is not a norm in the strict sense, but is only used to count the number of non-zero elements) with the convex $\ell_1$ norm, leading to the following relaxed problem known as analysis basis pursuit (ABP) \cite{ZhaoT}. In this case, $x$ can be estimated by solving a modified optimization problem
\begin{equation}\label{e8}
\begin{aligned}
&\min_x \|\Omega x\|_1 \\
&s.t.~\|y - Ax\|_2\leq\sigma,
\end{aligned}
\end{equation}
where $\|\cdot\|_1$ is the $\ell_1$ norm that sums the absolute values of a vector.
For this model, Cand\`{e}s et al. in \cite{Cand} give an error estimate of the cosparse analysis model under $\Omega$-RIP conditions, as shown below:
\begin{equation}\label{E25}
\|x - \hat{x}\|_2 \leq C_0\sigma + C_1\frac{\|\Omega_{S_0^c} x\|_1}{\sqrt{s}}.
\end{equation}
Using the approach in \cite{Cand}, for $0<\delta_{7s} \leq 0.5$,  one would get a estimation in \eqref{E25} with $C_0 = 62$ and $C_1 = 30$.
Compared with the result of \cite{Cand}, Li et al. \cite{LiSong1} give a better error result with $C_0 \approx 10.57$ and $C_1 \approx 5.06$.

\begin{remark}
For an integer $s$ with $1 \leq s \leq d$, the RIP \cite{CS1} and $\Omega$-RIP \cite{Cand} are defined as bellow,
where $\Omega$-RIP is a natural extension to the standard RIP.
\begin{definition}[\cite{CS1}]\label{d3}
For a measurement matrix $A\in R^{m\times d}$, the $s$-restricted isometry constant $\delta_s^A \in (0,1)$ of $A$ is the smallest quantity such that
\begin{eqnarray*}
 (1-\delta_s^A)\|x\|_2^2 \leq \|A x\|_2^2 \leq (1+\delta_s^A)\|x\|_2^2
\end{eqnarray*}
holds for all $s$-sparse signals $x$.
\end{definition}

\begin{definition}[\cite{Cand}]\label{d4}
Let $\Omega$ be a tight frame and $\sum_s$ be the set of all $s$-sparse vectors in $R^d$.
A measurement matrix $A$ is said to obey the restricted isometry property adapted to $\Omega$ (abbreviated $\Omega$-RIP) with the smallest constant $\delta_s \in (0,1)$ if
\begin{eqnarray*}
 (1-\delta_s)\|\Omega x\|_2^2 \leq \|A \Omega x\|_2^2 \leq (1+\delta_s)\|\Omega x\|_2^2
\end{eqnarray*}
holds for all $x\in \sum_s$.
\end{definition}
\end{remark}

\begin{remark}
It is easy to see that it's computationally difficult to verify the $\Omega$-RIP for a given deterministic matrix.
But for matrices with Gaussian, subgaussian, or Bernoulli entries, the $\Omega$-RIP condition will be satisfied with overwhelming probability provided that the numbers of measurements $m$ is on the order of $s \log(d/s)$.
In fact, for any $m \times d$ matrix $A$ obeying for any fixed $x\in R^d$
\begin{equation*}
  \mathbb{P} (|\|A x\|_2^2 - \|x\|_2^2|\geq \delta \|x\|_2^2)\leq c e^{-\gamma m \delta^2},~~\delta\in(0,1),
\end{equation*}
($c,\gamma$ are positive numerical constants) will satisfy the $\Omega$-RIP with overwhelming probability provided that $m \geq s \log(d/s)$ \cite{Cand}.
\end{remark}



\subsection{Non-convex methods for solving cosparse optimization problems}

Depending on the specific objective function being minimized, the $\ell_1$-norm regularization term, which promotes sparsity by penalizing the sum of the absolute values of the solution, leads to a convex optimization problem \eqref{e8}.
In contrast, the $\ell_0$-norm regularization term, which promotes sparsity by penalizing the number of non-zero values in the solution, leads to a non-convex optimization problem \eqref{e3}.
For the compressed sensing problem \eqref{e2}, the $\ell_p(0<p<1)$ minimization problem is one of the most popular non-convex relaxations.

Similarly, the non-convex relaxation of cosparse optimization can be formulated as below:
\begin{equation}\label{E1}
 \begin{split}
  &\min_x \|\Omega x\|_p^p\\
  &s.t.~\|Ax - y\|_2\leq\sigma,
   \end{split}
\end{equation}
where $0<p<1, \Omega \in R^{n\times d}, A\in R^{m\times d}, x\in R^d, y\in R^m$, $\sigma$ is a upper bound on the noise level $\|v\|_2$.
While determining the local minimizer of the problem is polynomially solvable in time, computing the global minimizer of the problem is NP-hard for any fixed $0<p<1$. Therefore, at least locally, solving \eqref{E1} is still significantly faster than solving \eqref{e3}.



In this paper, we first discuss the non-convex model of the cosparse optimization problem and give the error estimation of the model \eqref{E1} in the noisy environment.
Our error results are independent of algorithm design,
so the results of this paper are general and have a wide range of applications.
Second, under the same conditions, the range of $0<\delta_{7s}\leq 0.63$ given in this paper is wider than that of existing literature \cite{Cand} with $0<\delta_{7s}\leq 0.5$.
Even when $0<p < 0.9$, the value $\delta_{7s}$ given in this paper will not be lower than 0.71, and can reach 0.98.
Finally, when $p=0.5$ and $\delta_{7s}= 0.5$, the error constants $C_0=3.8273$ and $C_1=0.8840$ given in this paper are better than those
in the existing literature \cite{Cand}($C_0 = 62$, $C_1 = 30$) and \cite{LiSong1}($C_0 \approx 10.57$, $C_1 \approx 5.06$).
The experiment in this paper also shows that the non-convex error result is better than the error result in the convex case.

\subsection{Organization of the paper}

We consider the error bound of the non-convex cosparse optimization problem. The rest of the paper is organized as follows.
In Section 2 we review some basic theoretical knowledge.
In Section 3, we prove our main result with the help of some relevant lemmas and reasonable assumptions.
In Section 4, numerical experiments illustrate the validity of the error bounds given in this paper.
Section 5 is the conclusion.

\section{Preliminaries}

In view of the above problems, this paper studies the non-convex relaxation recovery theory of cosparse optimization under the noise condition $Ax + v = y $. In this paper, we consider  the following constrained $\ell_p(0<p<1)$ problem

For subsequent proofs, here are some inequality properties about the $\ell_p$ norm.

\begin{lemma}\label{Lem01}
Let $0< p \leq 1$, for any vector $x$ in $R^d$, one has
\begin{align*}
&\|x\|_2 \leq \|x\|_1 \leq \sqrt{d}\|x\|_2,\\
&\|x\|_p \leq d^{\frac{1}{p}-\frac{1}{2}}\|x\|_2.
\end{align*}
\end{lemma}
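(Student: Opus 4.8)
The plan is to prove the three inequalities one at a time, since they are independent and each rests on a classical tool. The chain $\|x\|_2 \le \|x\|_1 \le \sqrt{d}\,\|x\|_2$ splits into a lower bound obtained by squaring and an upper bound obtained from the Cauchy--Schwarz inequality, while the last inequality $\|x\|_p \le d^{1/p-1/2}\|x\|_2$ is the $\ell_p$-generalization and follows from H\"older's inequality with a suitably chosen pair of conjugate exponents.

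For the lower bound $\|x\|_2 \le \|x\|_1$, I would square the right-hand side and expand:
\begin{equation*}
\|x\|_1^2 = \Big(\sum_{i=1}^d |x_i|\Big)^2 = \sum_{i=1}^d |x_i|^2 + 2\sum_{i<j}|x_i||x_j| \ge \sum_{i=1}^d |x_i|^2 = \|x\|_2^2,
\end{equation*}
since every cross term is nonnegative; taking square roots gives the claim. For the upper bound $\|x\|_1 \le \sqrt{d}\,\|x\|_2$, I would write $\|x\|_1 = \sum_i |x_i|\cdot 1$ and apply Cauchy--Schwarz to the vectors $(|x_i|)_{i=1}^d$ and $(1,\dots,1)$, which yields $\|x\|_1 \le (\sum_i |x_i|^2)^{1/2}(\sum_i 1)^{1/2} = \sqrt{d}\,\|x\|_2$.

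The main work is the third inequality, although it is again only a matter of choosing the right exponents. I would start from $\|x\|_p^p = \sum_i |x_i|^p\cdot 1$ and apply H\"older's inequality with exponent $r = 2/p$ (which is at least $2$ because $0<p\le 1$) and its conjugate $r' = 2/(2-p)$. This gives
\begin{equation*}
\sum_{i=1}^d |x_i|^p \le \Big(\sum_{i=1}^d |x_i|^2\Big)^{p/2}\Big(\sum_{i=1}^d 1\Big)^{(2-p)/2} = \|x\|_2^{\,p}\, d^{(2-p)/2}.
\end{equation*}
Raising both sides to the power $1/p$ and using $(2-p)/(2p) = 1/p - 1/2$ then produces $\|x\|_p \le d^{1/p-1/2}\|x\|_2$. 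The only point requiring a moment's care is checking that $r$ and $r'$ are genuinely conjugate, i.e. $1/r + 1/r' = 1$, and that $r\ge 1$ so that H\"older applies; this is exactly where the hypothesis $0<p\le 1$ enters, and it is the sole obstacle in an otherwise routine argument.
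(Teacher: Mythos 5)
Your proposal is correct and follows essentially the same route as the paper: squaring and comparing term-by-term for $\|x\|_2 \leq \|x\|_1$, Cauchy--Schwarz against the all-ones vector for $\|x\|_1 \leq \sqrt{d}\,\|x\|_2$, and H\"older with the conjugate pair $2/p$ and $2/(2-p)$ for the $\ell_p$ bound. The exponent bookkeeping checks out ($d^{(2-p)/(2p)} = d^{1/p-1/2}$), so there is nothing to add.
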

\begin{proof}
Firstly, $\|x\|_2^2 = \sum_{i = 1}^d |x_i|^2 \leq \left(\sum_{i = 1}^d |x_i| \right)^2 = \|x\|_1^2$, therefore, $\|x\|_2 \leq \|x\|_1$. Next, we prove$\|x\|_1 \leq \sqrt{d}\|x\|_2.$
By the Cauchy-Schwartz inequality, we have
\begin{equation*}\label{Cauchy}
\|x\|_1 = \sum_{i = 1}^d |x_i| \leq \left(\sum_{i = 1}^d 1 \right)^{\frac{1}{2}}\left(\sum_{i = 1}^d  |x_i|^2 \right)^{\frac{1}{2}} = \sqrt{d}\|x\|_2.
\end{equation*}


Secondly, since $0<p\leq1$, we have $0<\frac{p}{2}<1$ and $\frac{2}{2-p}>1$. Combined with the H\"{o}lder inequality, we can derive the following inequality
\begin{align*}
\|x\|_p^p &= \sum_{i=1}^d |x_i|^p \cdot1\\
                 &\leq \left(\sum_{i=1}^d  |x_i|^2\right)^{\frac{p}{2}} \left(\sum_{i=1}^d 1^{\frac{2}{2-p}}\right)^{1-\frac{p}{2}} \\
                &= d^{1- \frac{p}{2}}\|x\|_2^p.
\end{align*}
The proof is completed.
\end{proof}


\section{Related Lemma}
Let $h = \hat{x} - x $, where $x$ is the true solution of \eqref{e3} and $\hat{x}$ is the minimizer of \eqref{E1}.
For $S \subset\{1,2,\ldots, n\}$, denote by $\Omega_S$ the matrix $\Omega$ restricted to the rows indexed by $S$. $S^c$ to mean the complement of $S$.
We assume that the first $s$ coordinates of $\Omega h$ are the largest in absolute value and $|t_{s+1}|\geq |t_{s+2}|\geq \cdots\geq |t_n|$, where $t_i, i=1,2,\ldots,n$ represents the complement elements of $\Omega h$.
Let $S_0$ denote the set of indicators for the first $s$ largest components, then $|S_0|=s$.
The $S_0^c$ represents the complement set of the set $S_0$.
Let the set $S_0^c$  divided into $S_1, \ldots, S_j,\ldots,S_J$ with $|S_j|=M>s$, where $j,J,M$ are positive integers,
then we have
\begin{equation*}
  \Omega h = \Omega_{S_0} h +\Omega_{S_1} h+ \cdots + \Omega_{S_J} h = (t_1,\ldots,t_s,t_{s+1},t_{s+2},t_{s+M},\ldots,t_n).
\end{equation*}
For the convenience of subsequent proofs, we will use the following symbols:
\begin{equation}\label{E2-0}
\rho=s^{\frac{1}{8}}M^{\frac{1-p}{p}},\quad \eta=\frac{2}{\sqrt{s}} \|\Omega_{S_0^c} x\|_1,\quad \kappa=\frac{s}{M},\quad \gamma=\rho M^{\frac{1}{2} -\frac{1}{p}},
\end{equation}
where $0<p<1$.

Our goal is to bound the norm of $h$. We will derive this bound through a sequence of short lemmas. The first lemma follows directly from the fact that $\hat{x}$ is the minimizer.


\begin{lemma}\label{Lem1}
Let $0 < p\leq 1$. We have the following inequality:
 \begin{equation}\label{E2}
  \|\Omega_{S_0^c} h\|_p^p \leq 2\|\Omega_{S_0^c} x\|_p^p  + \|\Omega_{S_0} h\|_p^p .
\end{equation}
\end{lemma}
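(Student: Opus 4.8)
The plan is to exploit the single fact that $\hat{x}$ is a minimizer of the non-convex program \eqref{E1} while $x$ is feasible for it. Since $Ax + v = y$ with $\|v\|_2 \le \sigma$, the true signal $x$ satisfies $\|Ax - y\|_2 = \|v\|_2 \le \sigma$, so $x$ lies in the feasible set of \eqref{E1}. Optimality of $\hat{x}$ then yields the single scalar inequality $\|\Omega \hat{x}\|_p^p \le \|\Omega x\|_p^p$. Writing $\hat{x} = x + h$, this becomes $\|\Omega(x+h)\|_p^p \le \|\Omega x\|_p^p$, which is the only consequence of the optimization we will need; everything afterward is elementary manipulation of the $\ell_p^p$ functional.

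Next I would use that $\|\cdot\|_p^p$ is separable over coordinates, so it splits additively across the disjoint index sets $S_0$ and $S_0^c$:
\begin{equation*}
\|\Omega_{S_0}(x+h)\|_p^p + \|\Omega_{S_0^c}(x+h)\|_p^p \;\le\; \|\Omega_{S_0} x\|_p^p + \|\Omega_{S_0^c} x\|_p^p .
\end{equation*}
The key tool for the two summands on the left is the coordinate-wise subadditivity $|u+v|^p \le |u|^p + |v|^p$, valid for $0 < p \le 1$ because $t \mapsto t^p$ is concave on $[0,\infty)$ and vanishes at the origin. Applied to $\Omega_{S_0^c} h = \Omega_{S_0^c}(x+h) - \Omega_{S_0^c} x$ and summed over the coordinates in $S_0^c$, this gives the reverse-type bound $\|\Omega_{S_0^c}(x+h)\|_p^p \ge \|\Omega_{S_0^c} h\|_p^p - \|\Omega_{S_0^c} x\|_p^p$. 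The same device applied on $S_0$ yields $\|\Omega_{S_0}(x+h)\|_p^p \ge \|\Omega_{S_0} x\|_p^p - \|\Omega_{S_0} h\|_p^p$.

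Substituting both lower bounds into the split inequality and cancelling the common term $\|\Omega_{S_0} x\|_p^p$ from the two sides leaves
\begin{equation*}
\|\Omega_{S_0^c} h\|_p^p - \|\Omega_{S_0^c} x\|_p^p - \|\Omega_{S_0} h\|_p^p \;\le\; \|\Omega_{S_0^c} x\|_p^p ,
\end{equation*}
and rearranging produces exactly $\|\Omega_{S_0^c} h\|_p^p \le 2\|\Omega_{S_0^c} x\|_p^p + \|\Omega_{S_0} h\|_p^p$, as claimed. There is no serious obstacle here; the only point demanding care is to invoke the correct direction of the quasi-norm inequality. Because $\ell_p$ for $p<1$ is not a genuine norm, one must rely on the coordinate-wise subadditivity of $t\mapsto t^p$ (rather than any homogeneity or a triangle inequality in $\|\cdot\|_p$ itself), and keep track of which estimates are upper bounds and which are lower bounds so that the cancellation of $\|\Omega_{S_0} x\|_p^p$ goes through cleanly.
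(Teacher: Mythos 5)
Your proof is correct and follows essentially the same route as the paper: both deduce $\|\Omega\hat{x}\|_p^p \le \|\Omega x\|_p^p$ from optimality and feasibility, split $\|\cdot\|_p^p$ over $S_0$ and $S_0^c$, and apply the reverse form of the coordinate-wise subadditivity $|u+v|^p \le |u|^p + |v|^p$ on each block before rearranging. Your version merely makes the justification of that subadditivity (concavity of $t\mapsto t^p$) explicit, which the paper leaves implicit.
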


\begin{proof}
For the case of $p=1$, the result was obtained in Reference \cite{Cand}. Thus, we only need to consider the case of $0< p <1$. Since both $x$ and $\hat{x}$ are feasible points of (\ref{E1}), and $\hat{x}$ is the minimizer of of (\ref{E1}),
we must have $\|\Omega \hat{x}\|_p \leq \|\Omega x\|_p$. By the triangular inequality, we have that
\begin{equation*}
\begin{aligned}
\|\Omega_{S_0} x\|_p^p  + \|\Omega_{S_0^c} x\|_p^p  = \|\Omega x\|_p^p  &\geq \|\Omega \hat{x}\|_p^p \\
& =  \|\Omega x + \Omega h\|_p^p  \\
& = \|\Omega_{S_0} x + \Omega_{S_0} h\|_p^p  + \|\Omega_{S_0^c} x + \Omega_{S_0^c} h\|_p^p   \\
&\geq \|\Omega_{S_0} x \|_p^p  - \|\Omega_{S_0} h\|_p^p  - \|\Omega_{S_0^c} x \|_p^p + \|\Omega_{S_0^c} h\|_p^p .
\end{aligned}
\end{equation*}
The conclusion can be obtained after sorting out the above formula.
\end{proof}
For the proof of the main theorem in the paper, we cite the lemma in the literature \cite{Rick} and give a concise proof process.
\begin{lemma}\label{Lem1-1}
Let $0 < p \leq 1$. We have the following inequality:
\begin{equation}\label{E2-1}
\|\Omega_{S_0^c} h \|_p^p \leq \|\Omega_{S_0} h \|_p^p.
\end{equation}
\end{lemma}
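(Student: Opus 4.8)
The plan is to read \eqref{E2-1} as the exactly-cosparse specialization of Lemma \ref{Lem1}. The only extra ingredient needed is that the true signal $x$ be genuinely $\ell$-cosparse with cosupport containing $S_0^c$, so that $\Omega_{S_0^c} x = 0$ and hence $\|\Omega_{S_0^c} x\|_p^p = 0$. Granting this, the corrective term $2\|\Omega_{S_0^c} x\|_p^p$ on the right of \eqref{E2} vanishes and \eqref{E2} collapses at once to $\|\Omega_{S_0^c} h\|_p^p \le \|\Omega_{S_0} h\|_p^p$. So at the level of bookkeeping the lemma is immediate from Lemma \ref{Lem1}.

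For a self-contained and equally short argument I would instead re-run the optimality estimate directly, which makes transparent where cosparsity enters. First, since $x$ and $\hat{x}=x+h$ are both feasible for \eqref{E1} and $\hat{x}$ minimizes the objective, $\|\Omega\hat{x}\|_p^p \le \|\Omega x\|_p^p$; using $\Omega_{S_0^c}x = 0$ the right-hand side is just $\|\Omega_{S_0}x\|_p^p$. Next I would split the left-hand side over $S_0$ and $S_0^c$, noting that $\Omega_{S_0^c}(x+h)=\Omega_{S_0^c}h$, to get $\|\Omega\hat{x}\|_p^p = \|\Omega_{S_0}(x+h)\|_p^p + \|\Omega_{S_0^c}h\|_p^p$. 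The one genuine inequality is the lower bound $\|\Omega_{S_0}(x+h)\|_p^p \ge \|\Omega_{S_0}x\|_p^p - \|\Omega_{S_0}h\|_p^p$, which is the reverse form of the $p$-quasinorm subadditivity $\|a+b\|_p^p \le \|a\|_p^p+\|b\|_p^p$ (valid for $0<p\le 1$ because $t\mapsto t^p$ is concave, hence subadditive, on $[0,\infty)$). Chaining the three relations and cancelling the common term $\|\Omega_{S_0}x\|_p^p$ leaves exactly \eqref{E2-1}.

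The computation itself is routine; the real point to get right is conceptual. The inequality is false for a general $x$: if $\Omega x$ has a long tail of small but nonzero analysis coefficients off $S_0$, then $\|\Omega_{S_0^c}h\|_p^p$ can exceed $\|\Omega_{S_0}h\|_p^p$, which is precisely why Lemma \ref{Lem1} must carry the term $2\|\Omega_{S_0^c}x\|_p^p$. Hence the main thing to verify carefully is the cosparsity hypothesis $\Omega_{S_0^c}x=0$ under which \eqref{E2-1} is being asserted. A secondary, purely technical caution is to invoke the subadditive (concave) $p$-quasinorm inequality for $0<p<1$ rather than the ordinary convex triangle inequality, since the latter has the wrong form in this range.
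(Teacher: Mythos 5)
Your self-contained argument is, in substance, the same as the paper's: both rest on the minimality of $\hat{x}$, on splitting $\|\cdot\|_p^p$ over the disjoint blocks $S_0$ and $S_0^c$, and on the reverse form of the $p$-quasinorm subadditivity applied to the $S_0$ block; your first route (setting $\|\Omega_{S_0^c}x\|_p^p=0$ in Lemma \ref{Lem1}) is just a repackaging of that computation. The genuinely valuable part of your write-up is the caveat you attach: the inequality requires $\Omega_{S_0^c}x=0$ and is false for general $x$. That is precisely the step the paper's own proof obscures. In the paper, the passage from $\|\Omega x\|_p^p-\|\Omega_{S_0}h\|_p^p\le\|\Omega x+\Omega_{S_0}h\|_p^p$ to the displayed chain adds $\|\Omega_{S_0^c}h\|_p^p$ on the left while inserting $\Omega_{S_0^c}h$ inside the quasinorm on the right; since the $S_0^c$ block of $\Omega x+\Omega_{S_0}h$ is $\Omega_{S_0^c}x$ rather than $0$, the right-hand side increases only by $\|\Omega_{S_0^c}x+\Omega_{S_0^c}h\|_p^p-\|\Omega_{S_0^c}x\|_p^p\ge\|\Omega_{S_0^c}h\|_p^p-2\|\Omega_{S_0^c}x\|_p^p$, and the dropped term $2\|\Omega_{S_0^c}x\|_p^p$ vanishes only when $x$ is exactly cosparse off $S_0$ (as in the cited source \cite{Rick}, where the signal is exactly sparse). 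So your proof is correct under the hypothesis you make explicit, and your diagnosis correctly identifies that the paper's unconditional statement is only supported in that exact-cosparsity case; for general $x$ one must fall back on Lemma \ref{Lem1} with its corrective term.
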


\begin{proof}
By the triangle inequality for $\|\cdot\|_p^p$, we have
\begin{equation*}\label{E2-2}
\begin{aligned}
\left|\|\Omega x\|_p^p -  \|-\Omega_{S_0} h\|_p^p \right | \leq \|\Omega x + \Omega_{S_0} h\|_p^p
\end{aligned}
\end{equation*}
Since $\hat{x}$ is the minimizer of \eqref{E1} and $S_0 \cap S_0^c = \emptyset$, we have
\begin{equation*}
\begin{aligned}
& \|\Omega x\|_p^p -  \|\Omega_{S_0} h\|_p^p + \|\Omega_{S_0^c} h \|_p^p \\
\leq & \|\Omega x + \Omega_{S_0} h + \Omega_{S_0^c} h\|_p^p  \\
=     & \|\Omega x   + \Omega h\|_p^p\\
\leq & \|\Omega x\|_p^p.
\end{aligned}
\end{equation*}
The proof is completed.
\end{proof}

\begin{lemma}\label{Lem2}
Let $0<p<1$ and $\rho, \eta$ be defined as in \eqref{E2-0}. Then, we have
\begin{equation}\label{E3}
\sum_{j\geq1}\|\Omega_{S_{j+1}} h\|_p^2 \leq \rho^2 (\|\Omega_{S_0} h\|_2 + \eta)^2 \leq \rho^2 (\|h\|_2 + \eta)^2.
\end{equation}
\end{lemma}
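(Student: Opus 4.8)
The plan is to prove the two inequalities separately, reducing the substantive first one to an estimate on the $\ell_2$-norm of the tail $\Omega_{S_0^c}h$. The blocks $S_2,S_3,\ldots$ (the sets $S_{j+1}$ for $j\ge1$) are pairwise disjoint subsets of $S_0^c$, each of size $M$, so applying the second inequality of Lemma~\ref{Lem01} on each such block gives $\|\Omega_{S_{j+1}}h\|_p^2 \le M^{2/p-1}\|\Omega_{S_{j+1}}h\|_2^2$, and summing over $j\ge1$ together with the additivity of $\|\cdot\|_2^2$ over disjoint supports yields
\begin{equation*}
\sum_{j\ge1}\|\Omega_{S_{j+1}}h\|_p^2 \;\le\; M^{2/p-1}\sum_{j\ge1}\|\Omega_{S_{j+1}}h\|_2^2 \;\le\; M^{2/p-1}\,\|\Omega_{S_0^c}h\|_2^2 .
\end{equation*}
By the definitions in \eqref{E2-0}, the identity $\gamma=\rho\,M^{1/2-1/p}$ gives $M^{2/p-1}\gamma^2=\rho^2$ with $\gamma=s^{1/8}M^{-1/2}$, so the whole claim follows once I establish the tail estimate $\|\Omega_{S_0^c}h\|_2 \le \gamma\,(\|\Omega_{S_0}h\|_2+\eta)$.

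For this tail estimate the minimizer is exploited: I would invoke Lemma~\ref{Lem1} (and/or Lemma~\ref{Lem1-1}) to dominate $\|\Omega_{S_0^c}h\|_p^p$ by the head term $\|\Omega_{S_0}h\|_p^p$ and the signal-tail term $\|\Omega_{S_0^c}x\|_p^p$, and then run a sorting/shifting argument: since the entries $|t_{s+1}|\ge|t_{s+2}|\ge\cdots$ are ordered, each entry of a tail block is controlled by the average magnitude of the preceding block, which converts $\|\Omega_{S_0^c}h\|_2$ into these $\ell_p$ quantities. Passing from $\|\Omega_{S_0}h\|_p$ to $\|\Omega_{S_0}h\|_2$ uses the second inequality of Lemma~\ref{Lem01} on the size-$s$ set $S_0$, while the $\ell_1$ form $\eta=\frac{2}{\sqrt s}\|\Omega_{S_0^c}x\|_1$ emerges after comparing $\|\Omega_{S_0^c}x\|_p$ with $\|\Omega_{S_0^c}x\|_1$ through the first inequality of Lemma~\ref{Lem01}. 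The second inequality of the lemma is then immediate from the tight-frame hypothesis $\Omega^T\Omega=I$, which gives $\|\Omega h\|_2=\|h\|_2$ and hence $\|\Omega_{S_0}h\|_2\le\|\Omega h\|_2=\|h\|_2$.

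I expect the main obstacle to be precisely the tail estimate: tracking the exponents of $s$ and $M$ through the chain of $\ell_p$--$\ell_2$ conversions so that they collapse exactly into $\gamma=s^{1/8}M^{-1/2}$, and in particular reconciling the $\ell_1$-based signal tail appearing in $\eta$ with the $\ell_p^p$ bounds produced by the minimizer lemmas without forfeiting the favorable powers of $s$. By comparison, the $\ell_2$-reduction in the first step and the tight-frame finish are routine, so the entire difficulty is concentrated in the bookkeeping that produces the constant $\gamma$.
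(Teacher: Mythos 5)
Your plan founders on the intermediate target you reduce to. After the same\-/block conversion $\|\Omega_{S_{j+1}}h\|_p^2\le M^{2/p-1}\|\Omega_{S_{j+1}}h\|_2^2$ you must prove $\|\Omega_{S_0^c}h\|_2\le\gamma\,(\|\Omega_{S_0}h\|_2+\eta)$ with $\gamma=s^{1/8}M^{-1/2}<1$, and that inequality is false for vectors satisfying everything Lemmas \ref{Lem1} and \ref{Lem1-1} can tell you. Take $\Omega_{S_0^c}x=0$ (so $\eta=0$) and let $\Omega h$ equal a constant $c$ on $S_0$ and on the first $s$ positions of $S_1$, and zero elsewhere: then $\|\Omega_{S_0^c}h\|_p^p=\|\Omega_{S_0}h\|_p^p$ and $\|\Omega_{S_0^c}h\|_1=\|\Omega_{S_0}h\|_1$, so \eqref{E2} and \eqref{E2-1} hold with equality, yet $\|\Omega_{S_0^c}h\|_2=c\sqrt s=\|\Omega_{S_0}h\|_2$, so no constant smaller than $1$ can work. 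The block $S_1$ is exactly the obstruction --- which is why the sum in \eqref{E3} starts at $S_2$ and why $S_1$ is later absorbed into $S_{01}$ in Lemma \ref{Lem7} rather than into this tail bound. The structural point you are missing is that the sorting argument must be used as a \emph{shift between consecutive blocks before} any norm conversion: every entry of $S_{j+1}$ is dominated by the block average $\|\Omega_{S_j}h\|_1/M$, giving $\|\Omega_{S_{j+1}}h\|_p^p\le M^{1-p}\|\Omega_{S_j}h\|_1^p$ directly. That is the paper's route: raise to the power $2/p$, sum using $\sum_j a_j^2\le(\sum_j a_j)^2$ and $\sum_{j\ge1}\|\Omega_{S_j}h\|_1=\|\Omega_{S_0^c}h\|_1$, then apply the $p=1$ case of Lemma \ref{Lem1} and $\|\Omega_{S_0}h\|_1\le\sqrt s\,\|\Omega_{S_0}h\|_2$. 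Your step 1 can be salvaged only if you likewise shift at the $\ell_2$ level, e.g.\ $\sum_{j\ge1}\|\Omega_{S_{j+1}}h\|_2\le M^{-1/2}\|\Omega_{S_0^c}h\|_1$; a wholesale bound on $\|\Omega_{S_0^c}h\|_2$ cannot recoup the $M^{2/p-1}$ you paid.

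On the ``bookkeeping'' you flagged as the main difficulty: it does not close, for you or for the paper. Both the shift-first chain and the repaired version of your argument produce the factor $s\,M^{(2-2p)/p}(\|\Omega_{S_0}h\|_2+\eta)^2$, since $(2\|\Omega_{S_0^c}x\|_1+\sqrt s\,\|\Omega_{S_0}h\|_2)^2=s\,(\eta+\|\Omega_{S_0}h\|_2)^2$. This equals $\rho^2(\cdots)^2$ only if $\rho=s^{1/2}M^{(1-p)/p}$, i.e.\ $\gamma=\sqrt{\kappa}$; the paper's step asserting the prefactor $s^{1/4}M^{(2-2p)/p}$ does not follow from its preceding line, and the exponent $1/8$ in \eqref{E2-0} is not something any correct version of this argument will deliver. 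So you were right to be suspicious of the constant, but the resolution is that the stated $\rho$ needs correcting, not that a cleverer conversion recovers $s^{1/8}$.
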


\begin{proof}
Let the absolute value of each elements of the $\Omega_{S_{j+1}}h$ is denoted by $|(\Omega_{S_{j+1}} h)_k|$ ($1\leq k \leq M$ is a positive integer),
then the $|(\Omega_{S_{j+1}} h)_k|$ does not exceed the mean value of the component elements in the set $S _j$.
Therefore, we have the following inequality
\begin{equation}\label{E3-1}
|(\Omega_{S_{j+1}} h)_k| \leq \frac{\|\Omega_{S_{j}} h\|_1}{M}.
\end{equation}
Since
\begin{equation*}
\sum_{k=1}^M |(\Omega_{S_{j+1}} h)_k|^p \leq \frac{1}{M^p}\sum_{k=1}^M \|\Omega_{S_{j}} h\|_1^p,
\end{equation*}
we have
\begin{equation*}
\|\Omega_{S_{j+1}} h\|_p^p \leq M^{1-p} \|\Omega_{S_{j}} h\|_1^p,
\end{equation*}
which is
\begin{equation}\label{J1}
\|\Omega_{S_{j+1}} h\|_p^2 \leq M^{\frac{2-2p}{p}} \|\Omega_{S_{j}} h\|_1^2.
\end{equation}
According to Lemma \ref{Lem1}, and the inequality $\|x\|_2 \leq \|x\|_1 \leq \sqrt{d}\|x\|_2$, we have
\begin{equation*}
\begin{aligned}
\sum_{j\geq 1}\|\Omega_{S_{j+1}} h\|_p^2 & \leq M^{\frac{2-2p}{p}} \sum_{j\geq 1}\|\Omega_{S_{j}} h\|_1^2 \\
&= M^{\frac{2-2p}{p}} \|\Omega_{S_0^c} h\|_1^2\\
&\leq  M^{\frac{2-2p}{p}} (2 \|\Omega_{S_0^c} x\|_1  + \|\Omega_{S_0} h\|_1)^2\\
&\leq  M^{\frac{2-2p}{p}} (2 \|\Omega_{S_0^c} x\|_1  + \sqrt{s}\|\Omega_{S_0} h\|_2)^2\\
&=  s^{\frac{1}{4}}M^{\frac{2-2p}{p}}(\frac{2}{\sqrt{s}} \|\Omega_{S_0^c} x\|_1  + \|\Omega_{S_0} h\|_2)^2\\
&=  \rho^2(\eta  + \|\Omega_{S_0} h\|_2)^2,
\end{aligned}
\end{equation*}
where the second inequality follows from $\|x\|_1 \leq \sqrt{d}\|x\|_2$ for $x\in \mathbb{R}^d$ in Lemma \ref{Lem1}, and the last equality follows from the definitions of $\rho$ and $\eta$ in \eqref{E2-0}. Thus, we obtain that the first inequality in (\ref{E3}).
\end{proof}

\begin{lemma}\label{Lem4}
Let $0<p<1$. We have the following inequality:
\begin{equation}\label{E4-0}
\|\Omega_{S_{j+1}} h\|_2^2 \leq M^{1-\frac{2}{p}}\|\Omega_{S_{j}} h\|_p^2.
\end{equation}
\end{lemma}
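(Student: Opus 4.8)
The plan is to exploit the decreasing rearrangement of the coordinates of $\Omega h$, exactly as in the proof of Lemma \ref{Lem2}, but now passing from the $\ell_p$ quasi-norm on block $S_j$ to the $\ell_2$ norm on the next block $S_{j+1}$. Recall that the coordinates were ordered so that $|t_{s+1}| \geq |t_{s+2}| \geq \cdots \geq |t_n|$, and the blocks $S_1, S_2, \ldots$ are consecutive segments of this ordering with $|S_j| = M$. Consequently every entry of $\Omega_{S_{j+1}} h$ is dominated in absolute value by every entry of $\Omega_{S_j} h$; in particular, each entry of $\Omega_{S_{j+1}} h$ is bounded by the minimal entry of $\Omega_{S_j} h$, which is in turn bounded by the $\ell_p$-average of that block.

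First I would make this averaging bound precise. For any index $k$ with $1 \leq k \leq M$, the monotonicity above gives
\begin{equation*}
|(\Omega_{S_{j+1}} h)_k|^p \leq \frac{1}{M}\sum_{i=1}^M |(\Omega_{S_j} h)_i|^p = \frac{1}{M}\|\Omega_{S_j} h\|_p^p,
\end{equation*}
and raising both sides to the power $1/p$ yields $|(\Omega_{S_{j+1}} h)_k| \leq M^{-\frac{1}{p}}\|\Omega_{S_j} h\|_p$. This is the exact analogue of inequality \eqref{E3-1}, with the $\ell_1$-mean replaced by the $\ell_p$-mean, and it is the only place where the ordering of the coordinates enters the argument.

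Next I would square this entrywise estimate and sum over the $M$ coordinates of the block $S_{j+1}$:
\begin{equation*}
\|\Omega_{S_{j+1}} h\|_2^2 = \sum_{k=1}^M |(\Omega_{S_{j+1}} h)_k|^2 \leq \sum_{k=1}^M M^{-\frac{2}{p}}\|\Omega_{S_j} h\|_p^2 = M^{1-\frac{2}{p}}\|\Omega_{S_j} h\|_p^2,
\end{equation*}
which is precisely the claimed inequality \eqref{E4-0}.

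The computation is short, and the only genuine point requiring care is the averaging step, namely justifying that each entry of the $(j+1)$-th block does not exceed the $\ell_p$-mean of the $j$-th block. The justification rests entirely on the fact that the coordinates are arranged in nonincreasing order of magnitude, so that the largest entry of block $S_{j+1}$ is at most the smallest entry of block $S_j$, and the minimum of a finite collection of nonnegative numbers never exceeds their average. Once this monotonicity is invoked, exactly as in Lemma \ref{Lem2}, the remainder is an elementary count of the factor $M$, so I expect no serious obstacle beyond correctly tracking the exponents of $M$.
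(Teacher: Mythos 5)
Your proposal is correct and follows essentially the same route as the paper's own proof: bound each entry of $\Omega_{S_{j+1}}h$ by the $\ell_p$-mean of the previous block using the decreasing ordering, then raise to the power $2/p$ and sum over the $M$ entries. The only difference is that you spell out the monotonicity justification for the averaging step more explicitly than the paper does.
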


\begin{proof}
For each $|(\Omega_{S_{j}} h)_{k}|$, we have
\begin{equation}\label{E4-1}
|(\Omega_{S_{j+1}} h)_k|^p \leq \frac{1}{M}\sum_{k=1}^M |(\Omega_{S_{j}} h)_{k}|^p = \frac{1}{M}\|\Omega_{S_{j}} h\|_p^p.
\end{equation}
Taking both sides of the inequality to the power of $\frac{2}{p}$ at the same time yields the following inequality
\begin{equation}\label{E4-2}
|(\Omega_{S_{j+1}} h)_k|^2 \leq \frac{1}{M^{\frac{2}{p}}}\|\Omega_{S_{j}} h\|_p^2.
\end{equation}
Summing both sides of the inequality at the same time gives the following result
\begin{equation}\label{E4-3}
\sum_{k=1}^M|(\Omega_{S_{j+1}} h)_k|^2 \leq \frac{M}{M^{\frac{2}{p}}}\|\Omega_{S_{j}} h\|_p^2.
\end{equation}
\eqref{E4-3} is equivalent to the following inequality
\begin{equation*}\label{E4-4}
\|\Omega_{S_{j+1}} h\|_2^2 \leq M^{1-\frac{2}{p}}\|\Omega_{S_{j}} h\|_p^2.
\end{equation*}
The proof is completed.
\end{proof}

\begin{lemma}\label{Lem7}
Let $0<p<1$ and $\rho, \eta, \kappa,\gamma$ be defined as in \eqref{E2-0}. Denote $S_{01} = S_0 \cup S_1$. If $A$ satisfies the $\Omega$-RIP, then we have
\begin{equation}\label{E5}
\|\Omega_{S_{01}}^T\Omega_{S_{01}} h \|_2^2 \leq \frac{4\sigma^2 + \kappa^{\frac{2 - p}{p}} (1+\delta_{M}) \| h\|_2^2 + \gamma^2 (1+\delta_{M})(\|h\|_2 + \eta)^2}{(1-\delta_{s+M})}.
\end{equation}
\end{lemma}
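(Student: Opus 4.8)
The plan is to sandwich the target quantity between two applications of the $\Omega$-RIP and to reduce everything to the block estimates already collected in Lemmas \ref{Lem01}, \ref{Lem2} and \ref{Lem4}. Write $w=\Omega_{S_{01}}^{T}\Omega_{S_{01}}h=\Omega^{T}(\Omega h)_{S_{01}}$, where $(\Omega h)_{S_{01}}$ denotes the analysis vector $\Omega h$ truncated to the index set $S_{01}$; this coefficient vector is $(s+M)$-sparse. Applying the lower $\Omega$-RIP inequality with constant $\delta_{s+M}$ to $w$ gives $(1-\delta_{s+M})\|w\|_2^2\le\|Aw\|_2^2$, which already accounts for the denominator. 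It therefore suffices to bound $\|Aw\|_2^2$ by the numerator.

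First I would exploit the tight-frame identity $\Omega^{T}\Omega=I$ to expand $h=\sum_{j\ge 0}\Omega_{S_j}^{T}\Omega_{S_j}h$, so that $w=h-\sum_{j\ge 2}\Omega_{S_j}^{T}\Omega_{S_j}h$ and hence $Aw=Ah-\sum_{j\ge 2}A\Omega_{S_j}^{T}\Omega_{S_j}h$. Since both $x$ and $\hat{x}$ are feasible for \eqref{E1}, the triangle inequality yields $\|Ah\|_2\le\|A\hat{x}-y\|_2+\|y-Ax\|_2\le 2\sigma$, which is the source of the $4\sigma^2$ term. Each tail term is controlled by the upper $\Omega$-RIP applied to the $M$-sparse coefficient vector supported on $S_j$, namely $\|A\Omega_{S_j}^{T}\Omega_{S_j}h\|_2^2\le(1+\delta_M)\|\Omega_{S_j}^{T}\Omega_{S_j}h\|_2^2\le(1+\delta_M)\|\Omega_{S_j}h\|_2^2$, where the last step uses that $\Omega\Omega^{T}$ is an orthogonal projection (its nonzero eigenvalues coincide with those of $\Omega^{T}\Omega=I$), so that $\|\Omega^{T}g\|_2\le\|g\|_2$. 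This is where the factors $(1+\delta_M)$ enter.

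The remaining work is to estimate $\sum_{j\ge 2}\|\Omega_{S_j}h\|_2^2$, and I would split off the first tail block from the rest. For $j\ge 3$, Lemma \ref{Lem4} converts each $\|\Omega_{S_j}h\|_2^2$ into $M^{1-\frac{2}{p}}\|\Omega_{S_{j-1}}h\|_p^2$; summing over $j\ge 3$ and feeding the result into Lemma \ref{Lem2} gives $\sum_{j\ge 3}\|\Omega_{S_j}h\|_2^2\le M^{1-\frac{2}{p}}\rho^2(\|h\|_2+\eta)^2=\gamma^2(\|h\|_2+\eta)^2$, using $\gamma=\rho M^{\frac12-\frac1p}$ from \eqref{E2-0}; this produces the term $\gamma^2(1+\delta_M)(\|h\|_2+\eta)^2$. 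For the single block $S_2$, I would again invoke Lemma \ref{Lem4} to pass to $\|\Omega_{S_1}h\|_p^2$ and then control this by $\|h\|_2^2$ through the monotone ordering of the entries of $\Omega h$ together with the $\ell_p$--$\ell_2$ comparison of Lemma \ref{Lem01}; the size ratio $\kappa=s/M$ is what turns the exponents into $\kappa^{\frac{2-p}{p}}$, yielding the term $\kappa^{\frac{2-p}{p}}(1+\delta_M)\|h\|_2^2$.

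The main obstacle is the assembly step: after substituting $Aw=Ah-\sum_{j\ge 2}A\Omega_{S_j}^{T}\Omega_{S_j}h$, a naive triangle inequality only produces a square of a sum, whereas the statement asserts a clean sum of squares with the noise term kept at exactly $4\sigma^2$ and the two norm terms carrying exactly $(1+\delta_M)$. Obtaining this requires handling the cross terms between $Ah$ and the tail, and among the tail blocks, via the near-orthogonality furnished by the $\Omega$-RIP rather than by crude bounding. A secondary delicate point is pinning down the constant $\kappa^{\frac{2-p}{p}}$ for the first tail block, where the mismatch between the size $s$ of the top block and the size $M$ of the partition blocks must be tracked carefully.
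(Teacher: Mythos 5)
Your route is the same one the paper takes: feasibility gives $\|Ah\|_2\le 2\sigma$; the tight frame gives $h=\Omega^T\Omega h=\Omega_{S_{01}}^T\Omega_{S_{01}}h+\sum_{j\ge1}\Omega_{S_{j+1}}^T\Omega_{S_{j+1}}h$; the lower $\Omega$-RIP is applied to the $S_{01}$ block and the upper $\Omega$-RIP to the tail blocks; and the tail is reduced, exactly as in the paper, to $\|\Omega_{S_1}h\|_p^2\le s^{\frac{2}{p}-1}\|h\|_2^2$ (Lemmas \ref{Lem1-1} and \ref{Lem01}) plus $\sum_{j\ge1}\|\Omega_{S_{j+1}}h\|_p^2\le\rho^2(\|h\|_2+\eta)^2$ (Lemmas \ref{Lem4} and \ref{Lem2}), which after multiplication by $M^{1-\frac{2}{p}}$ produces precisely the constants $\kappa^{\frac{2-p}{p}}$ and $\gamma^2$ in \eqref{E5}. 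All of those individual pieces are correct and match the paper's proof term by term.

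The genuine gap is the assembly step, which you name but do not close. From $Aw=Ah-\sum_{j\ge2}A\Omega_{S_j}^T\Omega_{S_j}h$, the triangle inequality yields only $\|Aw\|_2\le 2\sigma+\sum_{j\ge2}\|A\Omega_{S_j}^T\Omega_{S_j}h\|_2$, i.e.\ a square of a sum; converting this into the clean sum of squares appearing in \eqref{E5} requires control of the cross terms $\langle A\Omega_{S_i}^T\Omega_{S_i}h,\,A\Omega_{S_j}^T\Omega_{S_j}h\rangle$. You defer this to ``near-orthogonality furnished by the $\Omega$-RIP,'' but Definition \ref{d4} only bounds norms of images of individual sparse vectors; a restricted-orthogonality estimate is a separate statement that is neither stated nor proved in the paper, so your argument is incomplete at exactly the step on which the stated form of the lemma depends. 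Your suspicion that something nontrivial is needed here is well founded: the paper's own proof does not supply it either, since in \eqref{E6} it simply asserts $\|A\Omega^T\Omega h\|_2^2\ge\|A\Omega_{S_{01}}^T\Omega_{S_{01}}h\|_2^2-\sum_{j\ge1}\|A\Omega_{S_{j+1}}^T\Omega_{S_{j+1}}h\|_2^2$, an inequality of the shape $\|b+c\|_2^2\ge\|b\|_2^2-\sum_j\|c_j\|_2^2$ that does not follow from the triangle inequality and is false in general (take $c=-b/2$). The standard repair (as in Cand\`es et al.) is to work with first powers rather than squares, $\|Ah\|_2\ge\sqrt{1-\delta_{s+M}}\,\|\Omega_{S_{01}}^T\Omega_{S_{01}}h\|_2-\sqrt{1+\delta_M}\sum_{j\ge2}\|\Omega_{S_j}h\|_2$, which gives a linear analogue of \eqref{E5}; that weaker form is all that the subsequent theorem actually uses in \eqref{E13}.
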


\begin{proof}
Since both $x$ and $\hat{x}$ are feasible points of (\ref{E1}), it follows that
\begin{equation}
\|A h\|_2 = \|(Ax - y) - (A\hat{x} - y)\|_2\leq \|Ax - y\|_2 + \|A\hat{x} - y\|_2 \leq 2\sigma.
\end{equation}
Thus, we have
\begin{equation}\label{E6}
\begin{aligned}
4\sigma^2 &\geq \|A h\|_2^2 \\
&= \|A\Omega^T \Omega h\|_2^2\\
&\geq  \|A\Omega_{S_{01}}^T \Omega_{S_{01}} h\|_2^2 - \sum _{j\geq 1}\|A\Omega_{S_{j+1}}^T \Omega_{S_{j+1}} h\|_2^2\\
&\geq (1-\delta_{s+M})\|\Omega_{S_{01}}^T\Omega_{S_{01}} h \|_2^2 - (1+\delta_{M})\sum _{j\geq 1}\|\Omega_{S_{j+1}}^T \Omega_{S_{j+1}} h\|_2^2\\
&\geq (1-\delta_{s+M})\|\Omega_{S_{01}}^T\Omega_{S_{01}} h \|_2^2 - M^{1-\frac{2}{p}}(1+\delta_{M})\sum _{j\geq 1}\|\Omega_{S_{j}} h\|_p^2\\
&\geq (1-\delta_{s+M})\|\Omega_{S_{01}}^T\Omega_{S_{01}} h \|_2^2 - M^{1-\frac{2}{p}}(1+\delta_{M})\left(\|\Omega_{S_1} h\|_p^2 +  \sum _{j\geq 1}\|\Omega_{S_{j+1}} h\|_p^2\right) ,\\
\end{aligned}
\end{equation}
where the third inequality follows from Definition \ref{d4}, and the fourth inequality follows from Lemma \ref{Lem4} and $\|\Omega_{S_{j+1}}^T \Omega_{S_{j+1}} h\|_2^2\leq \|\Omega_{S_{j+1}} h\|_2^2 $.

Moreover, based on \eqref{E2-1} in Lemma \ref{Lem1-1}, we have
\begin{equation}\label{J2}
\|\Omega_{S_1} h\|_p \leq \|\Omega_{S_0^c} h \|_p \leq \|\Omega_{S_0} h \|_p.
\end{equation}
By Lemma \ref{Lem01}, we have
\begin{equation}\label{J3}
\|\Omega_{S_{0}} h\|_p^2 \leq s^{\frac{2}{p}-1} \|\Omega_{S_{0}} h\|_2^2.
\end{equation}
Combining inequality \eqref{J2} with inequality \eqref{J3}, and using $\|\Omega_{S_0} h\|_2 \leq \|h\|_2$, we have
\begin{equation}\label{J4}
\|\Omega_{S_{1}} h\|_p^2 \leq  s^{\frac{2}{p}-1} \| h\|_2^2.
\end{equation}
Using the inequality \eqref{J4} and Lemma \ref{Lem2}, let $\kappa = \frac{s}{M}$, then, the last inequality of \eqref{E6} is
\begin{equation}\label{E6-1}
\begin{aligned}
&M^{1-\frac{2}{p}}(1+\delta_{M})\left(\|\Omega_{S_1} h\|_p^2 +  \sum _{j\geq 1}\|\Omega_{S_{j+1}} h\|_p^2\right)\\
\leq& \kappa^{\frac{2 - p}{p}} (1+\delta_{M}) \| h\|_2^2 + M^{1-\frac{2}{p}}(1+\delta_{M}) \sum _{j\geq 1}\|\Omega_{S_{j+1}} h\|_p^2\\
\leq& \kappa^{\frac{2 - p}{p}} (1+\delta_{M}) \| h\|_2^2 + \rho^2 M^{1-\frac{2}{p}}(1+\delta_{M})(\|h\|_2 + \eta)^2.~~~(\text{Lemma \ref{Lem2}})
\end{aligned}
\end{equation}
Finally, using $\gamma = \rho M^{\frac{1}{2} -\frac{1}{p}}$ and combining \eqref{E6-1}, the inequality \eqref{E6} is equivalent to
\begin{equation}\label{E6-2}
\begin{aligned}
4\sigma^2 \geq &  (1-\delta_{s+M})\|\Omega_{S_{01}}^T\Omega_{S_{01}} h \|_2^2  - \kappa^{\frac{2 - p}{p}} (1+\delta_{M}) \| h\|_2^2 - \gamma^2 (1+\delta_{M})(\|h\|_2 + \eta)^2.
\end{aligned}
\end{equation}
Proofs are completed by sorting out the inequality \eqref{E6-2}.
\end{proof}

\begin{lemma}\label{Lem5}
Let $0<p<1$ and $\rho$, $\eta$, $\gamma$ be defined as in \eqref{E2-0}. Then, the error vector $h$ satisfies
\begin{equation}\label{E7}
(1 - \kappa^{\frac{2 - p}{p}})\| h \|_2^2 \leq \|h\|_2 \|\Omega_{S_{01}}^T\Omega_{S_{01}} h\|_2 + \gamma^2 (\|h\|_2 + \eta)^2.
\end{equation}
\end{lemma}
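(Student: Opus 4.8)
The plan is to exploit the tight-frame identity $\Omega^T\Omega=I$ to expand $h$ itself (not $\Omega h$) into its block pieces and then pair this expansion against $h$. Since $S_0,S_1,\dots,S_J$ partition the row index set and $S_{01}=S_0\cup S_1$, I would first write
\begin{equation*}
h=\Omega^T\Omega h=\Omega_{S_{01}}^T\Omega_{S_{01}}h+\sum_{j\geq1}\Omega_{S_{j+1}}^T\Omega_{S_{j+1}}h .
\end{equation*}
Taking the inner product with $h$ and using the elementary identity $\langle h,\Omega_T^T\Omega_T h\rangle=\|\Omega_T h\|_2^2$ for any index block $T$, the cross terms never appear and one obtains the exact relation
\begin{equation*}
\|h\|_2^2=\langle h,\Omega_{S_{01}}^T\Omega_{S_{01}}h\rangle+\sum_{j\geq1}\|\Omega_{S_{j+1}}h\|_2^2 .
\end{equation*}

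Next I would estimate the two right-hand terms separately. For the first, the Cauchy--Schwarz inequality gives $\langle h,\Omega_{S_{01}}^T\Omega_{S_{01}}h\rangle\leq\|h\|_2\,\|\Omega_{S_{01}}^T\Omega_{S_{01}}h\|_2$, which is precisely the first term of \eqref{E7}. For the tail I would recycle the chain already assembled inside the proof of Lemma~\ref{Lem7}, but without the RIP factor $(1+\delta_M)$: Lemma~\ref{Lem4} yields $\sum_{j\geq1}\|\Omega_{S_{j+1}}h\|_2^2\leq M^{1-\frac{2}{p}}\sum_{j\geq1}\|\Omega_{S_j}h\|_p^2$, and splitting off the first block gives $\sum_{j\geq1}\|\Omega_{S_j}h\|_p^2=\|\Omega_{S_1}h\|_p^2+\sum_{j\geq1}\|\Omega_{S_{j+1}}h\|_p^2$. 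Then inequality \eqref{J4} controls $\|\Omega_{S_1}h\|_p^2\leq s^{\frac{2}{p}-1}\|h\|_2^2$ and Lemma~\ref{Lem2} controls $\sum_{j\geq1}\|\Omega_{S_{j+1}}h\|_p^2\leq\rho^2(\|h\|_2+\eta)^2$. Invoking the arithmetic identities $M^{1-\frac{2}{p}}s^{\frac{2}{p}-1}=\kappa^{\frac{2-p}{p}}$ and $M^{1-\frac{2}{p}}\rho^2=\gamma^2$ coming from the definitions in \eqref{E2-0}, the tail collapses to $\sum_{j\geq1}\|\Omega_{S_{j+1}}h\|_2^2\leq\kappa^{\frac{2-p}{p}}\|h\|_2^2+\gamma^2(\|h\|_2+\eta)^2$.

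Substituting both bounds into the identity and transposing the $\kappa^{\frac{2-p}{p}}\|h\|_2^2$ term to the left produces exactly $(1-\kappa^{\frac{2-p}{p}})\|h\|_2^2\leq\|h\|_2\|\Omega_{S_{01}}^T\Omega_{S_{01}}h\|_2+\gamma^2(\|h\|_2+\eta)^2$, which is \eqref{E7}. I expect the only genuinely delicate point to be the opening step: realizing that the tight-frame assumption lets one expand $h$ block-by-block, and that pairing the expansion with $h$ turns each tail contribution into the nonnegative square $\|\Omega_{S_{j+1}}h\|_2^2$ rather than a signed cross term. Everything afterward is a routine re-use of Lemmas~\ref{Lem2} and \ref{Lem4} together with the scaling relations in \eqref{E2-0}; one should nonetheless verify the bookkeeping that $S_0,\dots,S_J$ partition the index set, so that $\Omega^T\Omega=\Omega_{S_{01}}^T\Omega_{S_{01}}+\sum_{j\geq1}\Omega_{S_{j+1}}^T\Omega_{S_{j+1}}$ holds exactly, which is where $\Omega^T\Omega=I$ is actually used.
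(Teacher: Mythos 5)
Your proposal is correct and follows essentially the same route as the paper: the paper opens with the isometry identity $\|h\|_2^2=\|\Omega h\|_2^2=\|\Omega_{S_{01}}h\|_2^2+\sum_{j\geq1}\|\Omega_{S_{j+1}}h\|_2^2$, rewrites $\|\Omega_{S_{01}}h\|_2^2=\langle h,\Omega_{S_{01}}^T\Omega_{S_{01}}h\rangle$ and applies Cauchy--Schwarz, then bounds the tail exactly as you do via Lemma~\ref{Lem4}, inequality \eqref{J4}, Lemma~\ref{Lem2}, and the scaling identities $M^{1-\frac{2}{p}}s^{\frac{2}{p}-1}=\kappa^{\frac{2-p}{p}}$, $M^{1-\frac{2}{p}}\rho^2=\gamma^2$. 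Your formulation of the opening step (expanding $h=\Omega^T\Omega h$ block-by-block and pairing with $h$) is just an equivalent phrasing of the paper's decomposition of $\|\Omega h\|_2^2$.
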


\begin{proof}
By Lemma \ref{Lem4}, we have $\|\Omega_{S_{j+1}} h\|_2^2 \leq M^{1-\frac{2}{p}}\|\Omega_{S_{j}} h\|_p^2$, since $\Omega$ is an isometry, we have
\begin{equation}\label{E7-1}
\begin{aligned}
\| h \|_2^2 = \| \Omega h \|_2^2  &=  \|\Omega_{S_{01}} h\|_2^2 +  \sum _{j\geq 1}\|\Omega_{S_{j+1}} h\|_2^2 \\
& \leq \langle h, \Omega_{S_{01}}^T\Omega_{S_{01}} h\rangle + M^{1-\frac{2}{p}}\sum _{j\geq 1}\|\Omega_{S_{j}} h\|_p^2\\
&\leq \| h \|_2 \|\Omega_{S_{01}}^T\Omega_{S_{01}} h\|_2 + M^{1-\frac{2}{p}} (\|\Omega_{S_1} h\|_p^2 + \sum _{j\geq 1}\|\Omega_{S_{j+1}} h\|_p^2 ).\\
\end{aligned}
\end{equation}
Using $\kappa = \frac{s}{M}$,  $\|\Omega_{S_{1}} h\|_p^2 \leq  s^{\frac{2}{p}-1} \| h\|_2^2$ and Lemma \ref{Lem2}, then, the last inequality of \eqref{E7-1} is
\begin{equation}\label{E7-2}
\begin{aligned}
&M^{1-\frac{2}{p}}\left(\|\Omega_{S_1} h\|_p^2 +  \sum _{j\geq 1}\|\Omega_{S_{j+1}} h\|_p^2\right)\\
\leq& \kappa^{\frac{2 - p}{p}}\| h\|_2^2 + M^{1-\frac{2}{p}}\sum _{j\geq 1}\|\Omega_{S_{j+1}} h\|_p^2\\
\leq& \kappa^{\frac{2 - p}{p}} \| h\|_2^2 + \rho^2 M^{1-\frac{2}{p}}(\|h\|_2 + \eta)^2.~~~(\text{Lemma \ref{Lem2}})
\end{aligned}
\end{equation}
Using $\gamma = \rho M^{\frac{1}{2} -\frac{1}{p}}$, combining \eqref{E7-1} and \eqref{E7-2}, the proof is completed.
\end{proof}

\begin{lemma}\label{Lem8}
Let $0<p<1$ and $\eta$, $\gamma$ be defined as in \eqref{E2-0}. If $\gamma<\sqrt{\frac{1}{4} - \frac{1}{2}\kappa^{\frac{2-p}{p}}}$,
the lower bound of $\|\Omega_{S_{01}}^T\Omega_{S_{01}} h\|_2$ satisfies
\begin{equation}\label{E12}
\sqrt{1 - 4\gamma^2 - 2\kappa^{\frac{2 - p}{p}}} \| h \|_2  - 2\gamma \eta \leq \|\Omega_{S_{01}}^T\Omega_{S_{01}} h\|_2.
\end{equation}
\end{lemma}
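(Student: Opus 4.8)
The plan is to derive \eqref{E12} directly from the quadratic inequality \eqref{E7} of Lemma \ref{Lem5}. Write $a = \|h\|_2$ and $b = \|\Omega_{S_{01}}^T\Omega_{S_{01}} h\|_2$, abbreviate $c = \kappa^{\frac{2-p}{p}}$, and set $s = \sqrt{1 - 4\gamma^2 - 2c}$. The hypothesis $\gamma < \sqrt{\frac14 - \frac12\kappa^{\frac{2-p}{p}}}$ is exactly the statement that $4\gamma^2 + 2c < 1$, so $s$ is a well-defined positive number; moreover, since $\kappa = s/M < 1$ forces $c < 1$, one checks $c + \gamma^2 < \frac{c}{2} + \frac14 < 1$, which will be needed below. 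In this notation \eqref{E7} reads $(1-c)a^2 \le ab + \gamma^2(a+\eta)^2$, and the goal \eqref{E12} is $b \ge sa - 2\gamma\eta$.

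First I would dispose of the trivial regime: if $sa - 2\gamma\eta \le 0$, then \eqref{E12} holds automatically because $b \ge 0$. Hence it remains only to treat the case $sa > 2\gamma\eta$, in which $a > 0$. The main step is to multiply the target by $a$ and feed in \eqref{E7}. Rearranging \eqref{E7} as $ab \ge (1-c)a^2 - \gamma^2(a+\eta)^2$ gives
\[
a(b + 2\gamma\eta) \ge (1-c)a^2 - \gamma^2(a+\eta)^2 + 2\gamma\eta a = (1-c-\gamma^2)a^2 + 2\gamma(1-\gamma)a\eta - \gamma^2\eta^2 .
\]
Thus it suffices to prove the key inequality
\[
(1-c-\gamma^2)a^2 + 2\gamma(1-\gamma)a\eta - \gamma^2\eta^2 \ge sa^2 ,
\]
because dividing by $a > 0$ then yields $b + 2\gamma\eta \ge sa$, which is \eqref{E12}.

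To establish the key inequality I would read its left minus right side as a quadratic in $\eta$ with negative leading coefficient $-\gamma^2$, hence concave. On the interval $0 \le \eta \le \frac{s}{2\gamma}a$ — which is precisely the regime $sa > 2\gamma\eta$ under consideration — a concave function attains its minimum at an endpoint, so it is enough to verify the two endpoints. At $\eta = 0$ the inequality becomes $1-c-\gamma^2 \ge s$, and at $\eta = \frac{s}{2\gamma}a$, after substituting $s^2 = 1-4\gamma^2-2c$, it collapses to $\gamma s \le \frac34 - \frac{c}{2}$.

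Both endpoint inequalities are elementary consequences of $4\gamma^2 + 2c < 1$. Since $1-c-\gamma^2 > 0$, the bound $1-c-\gamma^2 \ge s$ is equivalent after squaring to $(1-c-\gamma^2)^2 \ge 1-4\gamma^2-2c$, whose difference simplifies to $2\gamma^2 + \gamma^4 + c^2 + 2c\gamma^2 \ge 0$; and $\gamma s \le \frac{\gamma^2 + s^2}{2} = \frac{1-3\gamma^2-2c}{2} \le \frac34 - \frac{c}{2}$ by the AM--GM inequality together with $-6\gamma^2 - 2c \le 1$. I expect the genuine obstacle to be exactly this control of the cross term $2\gamma(1-\gamma)a\eta$: a naive bound such as $(a+\eta)^2 \le 2a^2 + 2\eta^2$ discards too much and produces a coefficient strictly weaker than $s$, so one must retain the term linear in $\eta$ and use the sign restriction $sa > 2\gamma\eta$ (equivalently, put the noise scale $\eta$ and the error norm $a=\|h\|_2$ on the same footing) in order to close the estimate with the sharp constant $s$.
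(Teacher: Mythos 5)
Your proof is correct, but it takes a genuinely different route from the paper's. The paper's argument is a three-line application of Young's inequality: it bounds the cross term by $\|h\|_2\,\|\Omega_{S_{01}}^T\Omega_{S_{01}}h\|_2 \leq \tfrac12\|h\|_2^2 + \tfrac12\|\Omega_{S_{01}}^T\Omega_{S_{01}}h\|_2^2$ and the mixed term by $2\|h\|_2\eta \leq \|h\|_2^2+\eta^2$, which turns \eqref{E7} into the quadratic bound $(1-4\gamma^2-2\kappa^{\frac{2-p}{p}})\|h\|_2^2 \leq \|\Omega_{S_{01}}^T\Omega_{S_{01}}h\|_2^2 + 4\gamma^2\eta^2$, and then concludes via $\sqrt{u^2+v^2}\leq u+v$. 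You instead keep the cross term intact, divide by $\|h\|_2$ (after correctly disposing of the trivial regime $sa\leq 2\gamma\eta$, where $b\geq 0$ suffices), and verify the resulting inequality by a concavity-in-$\eta$ endpoint check; both endpoint verifications ($1-c-\gamma^2\geq s$ and $\gamma s\leq \tfrac34-\tfrac c2$) are correct under $4\gamma^2+2c<1$, and you land on exactly the same constants. What your approach buys is a transparent identification of where the hypothesis $\gamma<\sqrt{\tfrac14-\tfrac12\kappa^{(2-p)/p}}$ enters; what it costs is length. One substantive remark: your closing claim that the ``naive'' bound $(a+\eta)^2\leq 2a^2+2\eta^2$ discards too much and cannot reach the sharp constant is mistaken --- that bound is precisely what the paper uses, and it closes the estimate with the identical constant $\sqrt{1-4\gamma^2-2\kappa^{(2-p)/p}}$, because the target \eqref{E12} is not $b\geq sa$ but $b\geq sa-2\gamma\eta$, and the extra $4\gamma^2\eta^2$ generated by the crude bound is exactly absorbed into the additive $2\gamma\eta$ via $\sqrt{u^2+v^2}\leq u+v$. (Minor point: your symbol $s=\sqrt{1-4\gamma^2-2c}$ collides with the paper's sparsity parameter $s$; rename it if you keep this write-up.)
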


\begin{proof}
For any nonnegative numbers $u,v$, we have the inequality $uv \leq \frac{u^2}{2} + \frac{v^2}{2}$. By $\|\Omega_{S_0} h \|_2 \leq \|h\|_2$ and Lemma \ref{Lem5}, we have
\begin{equation}\label{E10}
\begin{aligned}
(1 - \kappa^{\frac{2 - p}{p}})\| h \|_2^2 &\leq \|h\|_2 \|\Omega_{S_{01}}^T\Omega_{S_{01}} h\|_2 + \gamma^2 (\|h\|_2 + \eta)^2\\
&\leq \frac{\|h\|_2^2}{2} + \frac{\|\Omega_{S_{01}}^T\Omega_{S_{01}} h\|_2^2}{2} + \gamma^2(\|h\|_2 + \eta)^2 \\
&= \frac{\|h\|_2^2}{2} + \frac{\|\Omega_{S_{01}}^T\Omega_{S_{01}} h\|_2^2}{2} + \gamma^2 \|h\|_2^2 + 2\gamma^2 \|h\|_2 \eta + \gamma^2 \eta^2 \\
&\leq \frac{\|h\|_2^2}{2} + \frac{\|\Omega_{S_{01}}^T\Omega_{S_{01}} h\|_2^2}{2} + \gamma^2 \|h\|_2^2 + 2\gamma^2 \Big(\frac{\|h\|_2^2}{2} + \frac{\eta^2}{2}\Big) + \gamma^2 \eta^2.\\
\end{aligned}
\end{equation}
Simplifying, this yields
\begin{equation}\label{E11}
(1 - 4\gamma^2 - 2\kappa^{\frac{2 - p}{p}} ) \| h \|_2^2  \leq \|\Omega_{S_{01}}^T\Omega_{S_{01}} h\|_2^2 + 4\gamma^2\eta^2.
\end{equation}
From the equation \eqref{E11}, using $\gamma<\sqrt{\frac{1}{4} - \frac{1}{2}\kappa^{\frac{2-p}{p}}}$ and $\sqrt{u^2 + v^2} \leq u + v$ for any nonnegative numbers, we can further simply to get our desired lower bound.
\end{proof}

\begin{theorem}
Let $0<p<1$, $\alpha = 1 - 4\gamma^2 - 2\kappa^{\frac{2 - p}{p}}$, $\beta = \kappa^{\frac{1}{p} - \frac{1}{2}}$ and $\rho$, $\eta$, $\gamma$ be defined as in \eqref{E2-0}, if $A$ satisfies the $\Omega$-RIP condition and
\begin{equation}\label{E19-1}
\gamma<\sqrt{\frac{1}{4} - \frac{1}{2}\kappa^{\frac{2-p}{p}}},~~ \delta_{s+M} < 1 - \frac{(\beta + \gamma)^2}{\alpha}(1 + \delta_M),
\end{equation}
the solution $\hat{x}$ to \eqref{E1} obeys
\begin{equation}\label{E19}
\|x - \hat{x}\|_2 \leq \frac{2\sigma}{K_1} + \frac{K_2\eta}{K_1},
\end{equation}
where the constants $K_1$ and $K_2$ are depend on $\kappa$ and $\delta$.
\end{theorem}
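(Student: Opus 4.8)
The plan is to bound $\|h\|_2 = \|x-\hat{x}\|_2$ by sandwiching the quantity $\|\Omega_{S_{01}}^T\Omega_{S_{01}}h\|_2$ between the upper bound of Lemma~\ref{Lem7} and the lower bound of Lemma~\ref{Lem8}, and then solving the resulting linear inequality for $\|h\|_2$. The first observation I would record is the identity $\beta^2 = \kappa^{\frac{2}{p}-1} = \kappa^{\frac{2-p}{p}}$, which follows directly from $\beta = \kappa^{\frac{1}{p}-\frac{1}{2}}$; this lets me rewrite the term $\kappa^{\frac{2-p}{p}}(1+\delta_M)\|h\|_2^2$ appearing in \eqref{E5} as $\beta^2(1+\delta_M)\|h\|_2^2$, so that its square root is simply $\beta\sqrt{1+\delta_M}\,\|h\|_2$.

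The key step is to linearize the right-hand side of \eqref{E5}. Since all three summands in the numerator are nonnegative, subadditivity of the square root, $\sqrt{a+b+c}\le\sqrt{a}+\sqrt{b}+\sqrt{c}$, gives
\begin{equation*}
\|\Omega_{S_{01}}^T\Omega_{S_{01}}h\|_2 \le \frac{2\sigma + \sqrt{1+\delta_M}\,\big[(\beta+\gamma)\|h\|_2 + \gamma\eta\big]}{\sqrt{1-\delta_{s+M}}},
\end{equation*}
where I have grouped $\beta\|h\|_2 + \gamma(\|h\|_2+\eta) = (\beta+\gamma)\|h\|_2 + \gamma\eta$. This grouping is exactly what makes the combined $\|h\|_2$-coefficient equal to $\beta+\gamma$, and it is the reason the quantity $(\beta+\gamma)^2$ surfaces in the hypothesis \eqref{E19-1}.

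Next I would feed this into the lower bound \eqref{E12} of Lemma~\ref{Lem8}, namely $\sqrt{\alpha}\,\|h\|_2 - 2\gamma\eta \le \|\Omega_{S_{01}}^T\Omega_{S_{01}}h\|_2$ (recall $\alpha = 1-4\gamma^2-2\kappa^{\frac{2-p}{p}}>0$ under the first condition in \eqref{E19-1}). Multiplying through by $\sqrt{1-\delta_{s+M}}$ and collecting the terms proportional to $\|h\|_2$ on the left and the $\sigma,\eta$ terms on the right yields
\begin{equation*}
\Big[\sqrt{\alpha(1-\delta_{s+M})} - (\beta+\gamma)\sqrt{1+\delta_M}\Big]\|h\|_2 \le 2\sigma + \gamma\big(\sqrt{1+\delta_M} + 2\sqrt{1-\delta_{s+M}}\big)\eta.
\end{equation*}
Setting $K_1 = \sqrt{\alpha(1-\delta_{s+M})} - (\beta+\gamma)\sqrt{1+\delta_M}$ and $K_2 = \gamma\big(\sqrt{1+\delta_M} + 2\sqrt{1-\delta_{s+M}}\big)$ and dividing by $K_1$ produces exactly \eqref{E19}. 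The positivity of $K_1$ — needed both to divide and to obtain a meaningful bound — is equivalent, after squaring, to $\alpha(1-\delta_{s+M}) > (1+\delta_M)(\beta+\gamma)^2$, that is, to the second inequality $\delta_{s+M} < 1 - \frac{(\beta+\gamma)^2}{\alpha}(1+\delta_M)$ of \eqref{E19-1}.

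I expect the main obstacle to be conceptual rather than computational: realizing that one should apply subadditivity of the square root to \eqref{E5} in order to keep everything \emph{linear} in $\|h\|_2$, rather than squaring the lower bound \eqref{E12} and matching it against \eqref{E5}. The naive squaring route produces a quadratic inequality in $\|h\|_2$ with awkward cross terms in $\eta\|h\|_2$ and does not cleanly reproduce the stated hypothesis involving $(\beta+\gamma)^2$; the linearization, by contrast, makes both the form of the condition \eqref{E19-1} and the explicit constants $K_1,K_2$ fall out immediately.
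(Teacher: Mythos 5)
Your proposal is correct and follows essentially the same route as the paper's own proof: take the square root of the bound in Lemma~\ref{Lem7} via subadditivity to linearize it in $\|h\|_2$, combine with the lower bound of Lemma~\ref{Lem8}, and collect terms to identify $K_1$ and $K_2$, with condition \eqref{E19-1} guaranteeing $K_1>0$. The grouping $(\beta+\gamma)\|h\|_2+\gamma\eta$ and the resulting constants match the paper's \eqref{E15}--\eqref{E18} exactly.
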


\begin{proof}
By the inequality \eqref{E5} in Lemma \ref{Lem7} we have
\begin{equation}\label{E13}
\|\Omega_{S_{01}}^T\Omega_{S_{01}} h \|_2 \leq \frac{2\sigma + \kappa^{\frac{1}{p} - \frac{1}{2}} \sqrt{1+\delta_{M}} \| h\|_2 + \gamma \sqrt{1+\delta_{M}}(\|h\|_2 + \eta)}{\sqrt{1-\delta_{s+M}}}.\\
\end{equation}
Combining the equation \eqref{E12} with equation \eqref{E13}, we have
\begin{equation}\label{E14}
\begin{aligned}
&\sqrt{1 - 4\gamma^2 - 2\kappa^{\frac{2 - p}{p}}} \| h \|_2  - 2\gamma \eta \\
\leq &\frac{2\sigma + \kappa^{\frac{1}{p} - \frac{1}{2}} \sqrt{1+\delta_{M}} \| h\|_2 + \gamma \sqrt{1+\delta_{M}}(\|h\|_2 + \eta)}{\sqrt{1-\delta_{s+M}}},
\end{aligned}
\end{equation}
the above inequality easily implies
\begin{equation}\label{E15}
\begin{aligned}
&\left[\sqrt{(1 - 4\gamma^2 - 2\kappa^{\frac{2 - p}{p}})(1-\delta_{s+M})}  - (\kappa^{\frac{1}{p} - \frac{1}{2}} +\gamma)\sqrt{1 + \delta_{M}}\right]\|h\|_2  \\
\leq& 2\sigma  + \gamma\eta (2\sqrt{1-\delta_{s+M}} +  \sqrt{1 + \delta_{M}}),
\end{aligned}
\end{equation}
that is
\begin{equation}\label{E16}
\|h\|_2  \leq \frac{2\sigma  + \gamma\eta (2\sqrt{1-\delta_{s+M}} +  \sqrt{1 + \delta_{M}})}{\sqrt{(1 - 4\gamma^2 - 2\kappa^{\frac{2 - p}{p}})(1-\delta_{s+M})}  - (\kappa^{\frac{1}{p} - \frac{1}{2}} + \gamma)\sqrt{1 + \delta_{M}}}.
\end{equation}
Let
\begin{equation}\label{E17}
K_1 = \sqrt{(1 - 4\gamma^2 - 2\kappa^{\frac{2 - p}{p}})(1-\delta_{s+M})}  - (\kappa^{\frac{1}{p} - \frac{1}{2}} + \gamma)\sqrt{1 + \delta_{M}}
\end{equation}
and
\begin{equation}\label{E18}
K_2 = \gamma (2\sqrt{1-\delta_{s+M}} +  \sqrt{1 + \delta_{M}}),
\end{equation}
Using \eqref{E19-1}, we have $K_1>0$, then, the proof is completed.
\end{proof}


\section{Numerical experiments}
Since $\eta = \frac{2}{\sqrt{s}}\|\Omega_{S_0^c} x\|_1$, we have
\begin{equation}\label{E20}
\|x - \hat{x}\|_2 \leq \frac{2\sigma}{K_1} + \frac{2K_2}{K_1\sqrt{s}}\|\Omega_{S_0^c} x\|_1.
\end{equation}
Let $s = 100, M = 6s,~0<p<1$, $\delta_{7s} = 0.5,~\delta_{6s}= 0.4$, $\sigma = 10^{-4}$.
As shown in equation \eqref{E17} and \eqref{E18},  $K_1, K_2$ are two constants, and the $\ell_1$ norm of the nonprincipal component is proportional to $\eta$ according to the definition of $\eta$.
Then the inequality \eqref{E20} means:
the signal recovery error can be divided into two parts, one related to the noise energy $\sigma$ and the other related to the $\ell_1$ norm of the non-major component.
Where the term related to noise is a linear relationship and the term related to non-major components is a quadratic relationship.

For example, let $C_0 = \frac{2}{K_1}$ and $C_1 = \frac{2K_2}{K_1}$, inequality \eqref{E20} can be rewritten as
\begin{equation}\label{E21}
\|x - \hat{x}\|_2 \leq C_0\sigma + C_1\frac{\|\Omega_{S_0^c} x\|_1}{\sqrt{s}}.
\end{equation}
If we set $p = 0.5$, we have that whenever $\delta_{7s} \leq 0.5$ that the non-convex relaxation of cosparse optimization model \eqref{E1} reconstructs $\hat{x}$ satisfying
\begin{equation}\label{E22}
\|x - \hat{x}\|_2 \leq 3.8273\sigma + 0.8840\frac{\|\Omega_{S_0^c} x\|_1}{\sqrt{s}}.
\end{equation}
For the convex relaxation of cosparse optimization model \eqref{e8},
Cand\`{e}s et al. \cite{Cand} showed that if $A$ satisfies $\Omega$-RIP with $\delta_{7s} \leq 0.5$,  one would get a estimation in \eqref{E21}
with $C_0 = 62$ and $C_1 = 30$, that is the solution $\hat{x}$ to \eqref{e8} satisfies
\begin{equation}\label{E23}
\|x - \hat{x}\|_2 \leq 60\sigma + 30\frac{\|\Omega_{S_0^c} x\|_1}{\sqrt{s}}.
\end{equation}
And by Theorem 3.4 of \cite{LiSong1}, for the convex relaxation of cosparse optimization model \eqref{e8},
one would get a estimation with $C_0 \approx10.57$ and $C_1 \approx 5.06$, respectively
\begin{equation}\label{E24}
\|x - \hat{x}\|_2 \leq 10.57\sigma + 5.06\frac{\|\Omega_{S_0^c} x\|_1}{\sqrt{s}}.
\end{equation}
Through comparison, we found that in the error conclusion of this paper,
our results are significantly better than the established conclusions under the same conditions.

Table 1 gives results showing a larger range of $\delta_{7s}$ under the relaxation model \eqref{E1}.
When $p = 1$, the value of $\delta_{7s}$  can reach at least 0.63, which is better than the results of literature \cite{Cand}.
When $p = 0.7$, the range of $\delta_{7s}$  can reach 0.92, which is significantly better than the case of $p=1$.
And as $p$ decreases, the value range of $\delta_{7s}$ will continue to increase, for example, when $p = 0.1$, $\delta_{7s} = 0.98$.
More intuitively, when $p$ is fixed, it can be seen from Figure 1 that as $\delta_{7s}$ increases, $K_1$  decreases,
indicating that the theoretical results are consistent with the experimental results
\begin{table}[htbp]
\centering
\caption{We take a point every 0.01 steps of $\delta_{7s}$.}
\begin{tabular}{c|cccccc}
\hline
$p$                       & 0.1        & 0.3         & 0.5        & 0.7        & 0.9          & 1  \\
\hline
$\delta_{7s}$      & 0.98      & 0.98       & 0.97      &0.92       & 0.77        &0.63\\

$K_1$                  & 2.6E-2  & 1.9E-2   & 2.9E-2  & 6.1E-3  & 7.6E-3     & 3.5E-3  \\
\hline
\end{tabular}
\end{table}

Further, as can be seen from the Figure 2, when $p$ is fixed to be 0.1, 0.3, 0.5, 0.7 and  0.9,
the error changes with the change of $\eta$, and the error will not exceed 0.15, which shows that our error range is very stable.
And from the Figure 2 we also find a phenomenon that the error decreases as $p$ decreases.
In particular, when $0<p<1$, it can be seen from the figure that the error is smaller than $p=1$, which also shows that the non-convex error result in this paper is better than the error result in the convex case, that is for $p = 1$ of the cosparse optimization \eqref{E1}.

\begin{figure*}[h]
\centering
\begin{minipage}[t]{0.48\textwidth}
\centering
\includegraphics[width=\textwidth]{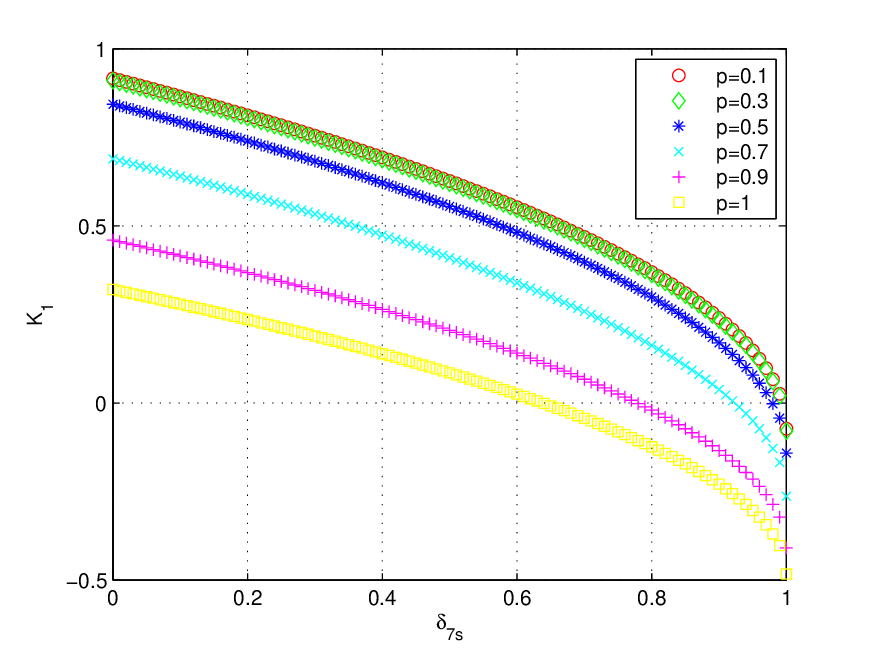}
\caption{$\delta_{7s}$ v.s. $K_1$ with $p$=0.1,0.3,0.5,0.7,0.9 and 1, respectively.}
\end{minipage}
\begin{minipage}[t]{0.48\textwidth}
\centering
\includegraphics[width=\textwidth]{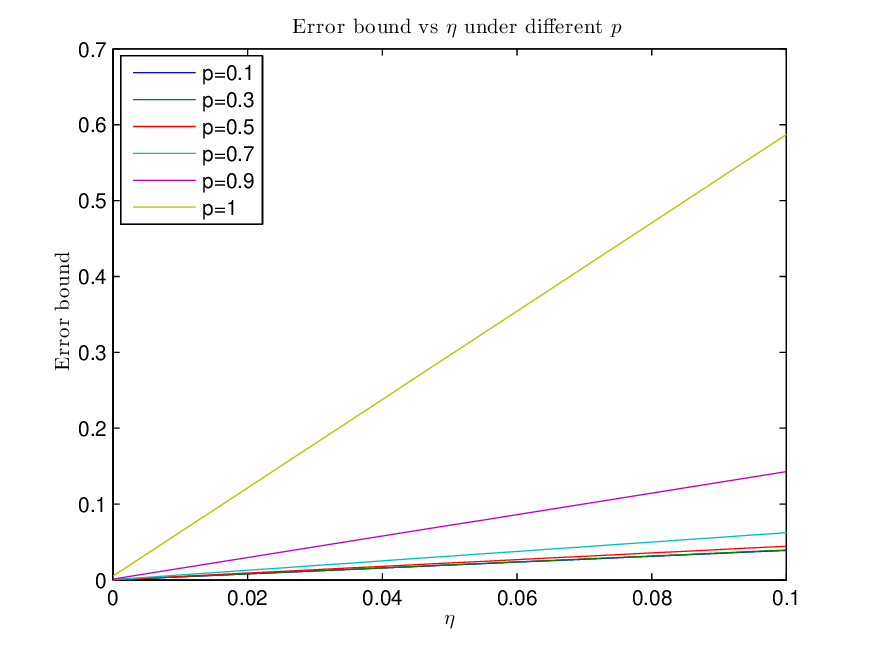}
\caption{Error curve for $0<\eta<0.01$ with $p$=0.1,0.3,0.5,0.7,0.9 and 1, respectively.}
\end{minipage}
\end{figure*}

\section{Conclusion}
Compared with sparse and cosaprse we find that both sparse and cosparse representation are often used in signal processing and machine learning applications, where they can be used to extract features or patterns from data, or to compress the data for storage or transmission. Sparse representation is particularly useful for data that has a sparse structure, where most of the coefficients in the representation are expected to be zero, while cosparse representation is particularly useful for data that has a structured, redundant, or repeating structure, where the atoms in the dictionary can be shared among multiple basis functions.

In this paper, under certain conditions on the measurement matrix $A$ and the cosparsity structure of $x$ in the analysis domain $\Omega$, the solution $\hat{x}$ to this optimization problem satisfies the error bound \eqref{E20}.
This demonstrates that the $\ell_p(0<p<1)$ cosparse optimization can stably recover the cosparse vector $x$ up to a precision level determined by the measurement noise $\sigma$ and scaling in terms of the cosparsity $s$ and number of measurements $m$.
Moreover, the $\Omega$-RIP constant $\delta_{7s}$ given in this paper has a wider range than existing literature \cite{Cand}.
When $p=0.5$ and $\delta_{7s}= 0.5$, the error constants $C_0=3.8273$ and $C_1=0.8840$ given in this paper are better than those in the existing literature \cite{Cand} and \cite{LiSong1}.
Also, experiments show that when $0<p<1$, the error result of the non-convex relaxation method in this paper is significantly smaller than that of the convex relaxation method.
Therefore, for the cosparse optimization, the $\ell_p(0<p<1)$ method provides a robust reconstruction for cosparse signal recovery.

\section*{Funding}

This work was supported by the National Natural Science Foundation of China under grant No. 12226323 and No. 12226315, and Henan Province Undergraduate College Youth Backbone Teacher Training Program.

\section*{Acknowledgments}

The authors wish to thank Professor Zheng-Hai Huang for providing his valuable comments which have significantly improved the quality of this paper.



\begin{thebibliography}{99}

\bibitem{Donoho1} D. L. Donoho. Compressed Sensing.
IEEE Trans. Inform. Theory, 52 (4), 2006.

\bibitem{Donoho2} Y. Tsaig, D. L. Donoho. Extensions of compressed sensing.
Signal Processing 86 (2006) 549-571.

\bibitem{CS1} E. J. Cand\`{e}s, T. Tao. Decoding by linear programming.
IEEE Trans. Inform. Theory, 51 (2004) 4203-4215.

\bibitem{CS2} E. J. Cand\`{e}s, J. Romberg, T. Tao. Robust uncertainty principles: exact signal reconstruction
from highly incomplete frequency information. IEEE Trans. Inform. Theory, 52 (2006) 489-509.

\bibitem{CS3} I. Mishraand S. Jain. Soft computing based compressive sensing techniques in signal processing: A
comprehensive review, J. Intell. Syst., 30, (2021), 312-326.

\bibitem{c1} A. M. Bruckstein, D. L. Donoho, M. Elad. From sparse solutions of systems of equations to sparse modeling of signals and images.
SIAM Rev.,  51 (2009) 34-81.

\bibitem{c2} M. Elad. Sparse and redundant representations: from theory to applications in signal and image processing.
Springer press (2010).

\bibitem{c3} S. Mallat. A wavelet tour of signal processing, third edition: the sparse way, 3rd edition.
Academic Press (2008).

\bibitem{c4} J. L. Starck, F. Murtagh, M. J. Fadili. Sparse image and signal processing-wavelets, curvelets, morphological diversity.
Cambridge University Press (2010).

\bibitem{CS} Q. Saad, M. B. Rana, I. Wafa, N. Muqaddas, and S Lee. Compressive sensing: from theory to applications, a survey.
J. Commun. Netw., 15 (2013) 443-456.

\bibitem{c19} M. Elad, M. Aharon. Image denoising via sparse and redundant representations over learned dictionaries.
IEEE Trans. Image Process. 15 (2006) 3736-3745.


\bibitem{c21} A. L. Casanovas, G. Monaci, P. Vandergheynst, R. Gribonval. Blind audiovisual source separation based on sparse representations.
IEEE Trans. Multimedia. 12 (2010) 358-371.

\bibitem{c22} M. D. Plumbley, T. Blumensath, L. Daudet, R. Gribonval, M. E. Davies. Sparse representations in audio and music: from coding to source separation.
Proc. IEEE Invited Paper. 98 (2009) 995-1005.


\bibitem{LS} S. Ljubisa, S. Ervin, S. Srdjan, D. Milos, O. Irena.  A tutorial on sparse signal reconstruction and its applications in signal processing,   Circuits, Syst. Signal Process.,  38 (2019) 1206-1263.



\bibitem{LT} T. Laura, C. H. Anders. Non-uniform recovery guarantees for binary measurements and infinite-dimensional compressed sensing, J. Fourier Anal. Appl., 27 (2021) 1-44.





\bibitem{IM} M. Ishani and J. Sanjay. Soft computing based compressive sensing techniques in signal processing: a comprehensive review,   Int. J. Intell. Syst.,  30 (2020) 312-326.

\bibitem{NS}   S. Nam,  M. E. Davies,  M. Elad and R. Gribonval.  The  cosparse  analysis  model  and algorithms.
Appl. Comput. Harmon. Anal., 34 (2013) 30-56.

\bibitem{GR}  R. Gao,  S. A. Vorobyov, H. Zhao. Image fusion with cosparse analysis operator. IEEE
Signal Proc. Let., 24 (2017) 943-947.

\bibitem{YM}  M. Yaghoobi, S. Nam, R. Gribonval, M. E. Davies. Constrained overcomplete analysis
operator  learning  for  cosparse  signal  modelling.  IEEE  Trans.  Signal  Proc.,
61 (2013) 2341-2355.

\bibitem{RR} R. Rubinstein, T. Faktor, M. Elad. Analysis K-SVD: a dictionary-learning algorithm
for the analysis sparse model. IEEE Trans. Signal Proc., 61 (2013) 661-677.


\bibitem{c30} R. Giryes, S. Nam, M. Elad, R. Gribonval, M. E. Davies. Greedy-like algorithms for the cosparse analysis model.
Linear Algebra Appl., 441 (2014) 22-60.

\bibitem{c23} S. Nam, M. E. Davies, M. Elad, R. Gribonval. Cosparse analysis modeling-uniqueness and algorithms, in: IEEE International Conference on Acoustics.
Speech and Signal Processing, ICASSP 2011, Prague, Czech Republic, May (2011).

\bibitem{c5} S. Nam, M. E. Davies, M. Elad, R. Gribonval. The cosparse analysis model and algorithms.
Appl. Comput. Harmon. Anal., 34 (2013) 30-56.

\bibitem{c62} Z. Liu, J. Li, W. Li and P. Dai. A modified greedy analysis pursuit algorithm for the cosparse analysis model,
Numer. Algor., 74 (2017) 867-887.

\bibitem{RGiryes}  T. Tirer and R. Giryes.  Generalizing   CoSaMP   to   signals   from   a union   of  low   dimensional linear   subspaces,
Appl. Comput. Harmon. Anal., 49, (2020), 99-122.

\bibitem{Davies1} M. S. Kotzagiannidis and M. E. Davies. Analysis vs synthesis with structure-an investigation of union of subspace models on graphs,
Appl. Comput. Harmon Anal., 60 (2022) 293-332.

\bibitem{Song} H. Song, X. Ren, Y. Lai, H. Meng. Sparse analysis recovery via iterative cosupport detection estimation,
IEEE Access, 9, (2021), 38386-38395.

\bibitem{c6} V. K. Goyal,  J. Kovacevic.  J. A. Kelner. Quantized frame expansions with erasures.
Appl. Comput. Harmon Anal., 10 (2001) 203-233.

\bibitem{c7} D. Li,  M. Xue. Bases and frames in Banach spaces (in Chinese), Science Press, Beijing, 2007.

\bibitem{ZhaoT} T. Zhao  T,  C. E. Yonina,  B. Amir  and  N. Arye.  Smoothing  and  decomposition  for
analysis sparse recovery. IEEE Trans. Signal Proc., 62 (2014) 1762-1774.

\bibitem{Cand} E. J. Cand\`{e}s, Y. C. Eldar, D. Needell and P. Randall. Compressed sensing with coherent and redundant dictionaries.
Appl. Comput. Harmon. Anal., 31 (2011) 59-73.

\bibitem{Rick} R. Chartrand and V. Staneva. Restricted isometry properties and nonconvex compressive sensing, Inverse Problems 24 (2008) 035020 (14pp).

\bibitem{LiSong1} S. Li, J. Lin. Compressed sensing with coherent tight frames via $\ell_q$-minimization for $0 < q \leq 1$, Inverse Probl. Imaging, 8 (2014) 761-777.

%
%
%
%
%




\bibitem{LiSong2} R. Zhang and S. Li. Optimal RIP   bounds   for   sparse   signals   recovery
via $\ell_p$ minimization. Appl. Comput. Harmon. Anal. 47 (2019) 566-584.














%
%
%
%
%
%
%
%
%
%
%
%

%
%

\end{thebibliography}
\end{document}